\documentclass[12pt]{article}

\hoffset 0pt
\voffset 0pt
\oddsidemargin 0pt
\topmargin 0pt
\headheight 12pt
\headsep 20pt
\textheight 8in
\textwidth 6.5in
\marginparsep 0pt
\marginparwidth 0pt
\footskip 24pt

\paperheight 11in
\paperwidth 8.5in

\usepackage{epstopdf}
\usepackage{latexsym, amsmath, amscd,amsthm}
\usepackage{amssymb}
\usepackage{latexsym, amsmath, amscd,amsthm,booktabs,array,url}
\usepackage{graphicx,epsf}

\newtheorem{theorem}{Theorem}[section]
\newtheorem{corollary}[theorem]{Corollary}
\newtheorem{proposition}[theorem]{Proposition}
\newtheorem{lemma}[theorem]{Lemma}

\theoremstyle{definition}

\newtheorem{example}[theorem]{Example}
\newtheorem{remark}[theorem]{Remark}

\begin{document}

\title{Laplacian eigenvalue distribution for unicyclic graphs}

\author{Sunyo Moon\footnotemark[1]\,  
and Seungkook Park\footnotemark[2]} 

\date{}
\renewcommand{\thefootnote}{\fnsymbol{footnote}}
\footnotetext[1]{School of Computational Sciences, Korea Institute for Advanced Study, Seoul, Republic of Korea(symoon@kias.re.kr)}
\footnotetext[2]{Department of Mathematics and Research Institute of Natural Sciences, Sookmyung Women's University, Seoul, Republic of Korea(skpark@sookmyung.ac.kr)}
\renewcommand{\thefootnote}{\arabic{footnote}}

\maketitle

\begin{abstract}
Let $G$ be a unicyclic graph.
In this paper, we provide an upper bound for the number of Laplacian eigenvalues of $G$ within the interval $[0,1)$ in terms of the diameter and the girth of $G$.
\end{abstract}

\section{Introduction}\label{sec:intro}
Let $G=(V(G),E(G))$ be a simple graph with vertex set $V(G)$ and edge set $E(G)$.
The {\it degree} of the vertex $v$ is the number of vertices adjacent to $v$.
The Laplacian matrix $L(G)$ of $G$ is defined by $L(G)=D(G)-A(G)$, where $D(G)$ is the diagonal matrix of vertex degree and $A(G)$ is the adjacency matrix of $G$.
An eigenvalue of $L(G)$ is called a {\it Laplacian eigenvalue} of $G$.
Since $L(G)$ is symmetric and positive semidefinite, all its eigenvalues are real and non-negative. We shall index the Laplacian eigenvalues of $G$ in non-decreasing order and denote them as 
\begin{equation*}
    0=\mu_1(G) \leq \mu_2(G) \leq \cdots \leq \mu_n(G). 
\end{equation*}
The multiplicity of the Laplacian eigenvalue $\mu$ is denote by $m_G(\mu)$. 
It is well known that the largest Laplacian eigenvalue can not exceed $n$. Let $I$ be an interval in $[0,n]$. We denote the number of Laplacian eigenvalues of $G$ in the interval $I$ by $m_GI$.
For a connected graph $G$, the {\it distance} between two vertices in $G$ is the length of the shortest path containing them.
The largest distance between any two vertices in $G$ is called the {\it diameter} of $G$ and is denoted by $d(G)$. A {\it diametral path} of a graph is a shortest path whose length is equal to the diameter of the graph.
A {\it dominating set} is a subset $D$ of $V(G)$ such that every vertex $u \in V(G)\backslash D$ is adjacent to a vertex in $D$. 
The {\it domination number} of $G$, denoted by $\gamma(G)$, is the minimum cardinality of a dominating set of $G$.
A {\it tree} is a connected graph without any cycles and a {\it unicyclic graph }is a connected graph containing exactly one cycle.
For a graph $G$, a lower bound for the domination number $\gamma(G)$ was given in \cite{Haynes,Kang}, which is $$\frac{d(G)+1}{3} \leq \gamma(G).$$ 
In 2016, Hedetniemi, Jacobs and Trevisan  \cite{Hedetniemi} proved that for any graph $G$, $$m_G[0,1) \leq \gamma(G).$$
Recently, the relation between $\frac{d(T)+1}{3}$ and $m_G[0,1)$ was given by Guo, Xue and Liu in \cite{Guo}. They showed that for any tree $T$, $ \frac{d(T)+1}{3} \leq m_T[0,1)$ and hence 
\begin{equation*}
    \frac{d(T)+1}{3} \leq m_T[0,1) \leq \gamma(T).
\end{equation*}
However, the inequality $\frac{d(G)+1}{3} \leq m_G[0,1)$ does not hold for a general connected graph $G$. For example, the cycle graph $C_6$ with 6 vertices has one eigenvalue in $[0,1)$. However, $\frac{d(C_6)+1}{3}=\frac{4}{3}>1$.
In this paper, we show that if $G$ is a unicyclic graph with girth $r$, then $$\bigg\lceil\frac{d(G)}{3} \bigg\rceil+\bigg\lceil \frac{r}{6} \bigg\rceil-1 \leq m_G[0,1).$$
Hence we obtain the following relation for a unicyclic graph $G$ with girth $r \geq 7$:
\begin{equation*}
    \frac{d(G)+1}{3} \leq \bigg\lceil\frac{d(G)}{3} \bigg\rceil +\bigg\lceil \frac{r}{6} \bigg\rceil-1 \leq m_G[0,1) \leq \gamma(G).
\end{equation*}

\section{Preliminaries}
Let $G$ be a graph and let $e$ be an edge of $G$. 
The graph $G-e$ is obtained from $G$ by deleting the edge $e$. 
The following theorem implies that removing an edge from a graph leads to a decrease in the values of the Laplacian eigenvalues of the graph.
\begin{theorem}[\cite{Grone} Theorem 4.1]\label{lem:interlacing}
    Let $G$ be a graph with $n$ vertices and let $e$ be an edge of $G$.
    For $i=1,\dots, n-1$,
    \begin{equation*}
        \mu_i(G) \leq \mu_{i+1}(G-e) \leq \mu_{i+1}(G). 
    \end{equation*}
    
\end{theorem}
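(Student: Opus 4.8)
\section*{Proof proposal}

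The plan is to realize $L(G)$ as a rank-one positive semidefinite perturbation of $L(G-e)$ and then read off the two inequalities from the variational (Courant--Fischer) and Cauchy interlacing characterizations of eigenvalues. Write $e=\{u,v\}$ and let $\mathbf{b}_e=\mathbf{e}_u-\mathbf{e}_v\in\mathbb{R}^n$ be the signed incidence vector of $e$, where $\mathbf{e}_u,\mathbf{e}_v$ denote standard basis vectors. The first step is the identity
\begin{equation*}
L(G)=L(G-e)+\mathbf{b}_e\mathbf{b}_e^{\top},
\end{equation*}
which I would verify entrywise: deleting $e$ lowers the $(u,u)$ and $(v,v)$ diagonal degree entries by one and raises the $(u,v)$ and $(v,u)$ off-diagonal entries from $-1$ to $0$, which is exactly the content of the rank-one matrix $\mathbf{b}_e\mathbf{b}_e^{\top}$. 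In particular $\mathbf{b}_e\mathbf{b}_e^{\top}$ is symmetric, positive semidefinite, and of rank one.

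For the right-hand inequality $\mu_{i+1}(G-e)\le\mu_{i+1}(G)$, I would use that $\mathbf{b}_e\mathbf{b}_e^{\top}\succeq 0$ forces $x^{\top}L(G)x\ge x^{\top}L(G-e)x$ for every $x\in\mathbb{R}^n$. Feeding this pointwise domination of Rayleigh quotients into the Courant--Fischer min-max formula $\mu_k(M)=\min_{\dim S=k}\max_{0\ne x\in S}\frac{x^{\top}Mx}{x^{\top}x}$ yields $\mu_k(G-e)\le\mu_k(G)$ for every $k$; taking $k=i+1$ is the claim.

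The left-hand inequality $\mu_i(G)\le\mu_{i+1}(G-e)$ is the step that genuinely uses the rank-one structure, and I expect it to be the main obstacle. My approach is to work on the hyperplane $W=\{x\in\mathbb{R}^n:\mathbf{b}_e^{\top}x=0\}=\{x:x_u=x_v\}$, which has dimension $n-1$. For $x,y\in W$ we have $x^{\top}\mathbf{b}_e\mathbf{b}_e^{\top}y=(\mathbf{b}_e^{\top}x)(\mathbf{b}_e^{\top}y)=0$, so the compressions of $L(G)$ and of $L(G-e)$ to $W$ are the \emph{same} $(n-1)\times(n-1)$ symmetric matrix; call its eigenvalues $\nu_1\le\cdots\le\nu_{n-1}$. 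Now I would apply Cauchy's interlacing theorem---compressing an $n\times n$ symmetric matrix to a codimension-one subspace interlaces its spectrum as $\lambda_k\le\nu_k\le\lambda_{k+1}$---twice. Applied to $L(G)$ it gives $\mu_i(G)\le\nu_i$, and applied to $L(G-e)$ it gives $\nu_i\le\mu_{i+1}(G-e)$. Chaining these bounds through the common value $\nu_i$ produces $\mu_i(G)\le\mu_{i+1}(G-e)$, completing the proof. The only points requiring care are fixing the non-decreasing indexing conventions so that Cauchy interlacing reads as stated, and checking that the two compressions coincide, which is immediate once one observes that $\mathbf{b}_e^{\top}x=0$ on $W$.
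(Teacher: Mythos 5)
Your proposal is correct, and there is nothing in the paper to check it against: the paper states this result as a quotation from Grone--Merris--Sunder \cite{Grone} (their Theorem 4.1) and gives no proof of its own. Your argument stands on its own merits. The decomposition $L(G)=L(G-e)+\mathbf{b}_e\mathbf{b}_e^{\top}$ is the standard starting point, and both halves of your proof are sound: the right-hand inequality is Weyl monotonicity for a positive semidefinite perturbation, and the left-hand inequality via the common compression to the hyperplane $\{x: x_u=x_v\}$, followed by two applications of Cauchy interlacing, is a clean and complete argument (the key observation that $P^{\top}\mathbf{b}_e\mathbf{b}_e^{\top}P=0$ when the columns of $P$ span $\ker\mathbf{b}_e^{\top}$ is exactly what makes the two compressions coincide). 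It is worth noting that the paper's own toolkit yields a shorter route to the left-hand inequality: Lemma~\ref{lem:eig_sum} (Weyl's inequality, \cite{Horn} Theorem 4.3.1) with $A=L(G-e)$, $B=\mathbf{b}_e\mathbf{b}_e^{\top}$, and $j=n-1$ gives
\begin{equation*}
\mu_i(G)=\mu_{(i+1)+(n-1)-n}(A+B)\leq \mu_{i+1}(G-e)+\mu_{n-1}\bigl(\mathbf{b}_e\mathbf{b}_e^{\top}\bigr)=\mu_{i+1}(G-e),
\end{equation*}
since the rank-one matrix $\mathbf{b}_e\mathbf{b}_e^{\top}$ has $\mu_1=\cdots=\mu_{n-1}=0$; this is precisely the mechanism the authors exploit repeatedly (e.g.\ with the matrices $M$, $N$, $A$, $B$ in Propositions~\ref{prop:lollipop}, \ref{prop:comp-r6}, and \ref{prop:compass_2}). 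Your compression argument buys independence from the full Weyl inequality, needing only codimension-one Cauchy interlacing; the Weyl route buys brevity and consistency with the rest of the paper. Either is acceptable.
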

The following lemma shows that attaching a pendant vertex to a graph does not change the lower bound for the number of Laplacian eigenvalues in the interval $[0,1)$. 
\begin{lemma}[\cite{Guo} Lemma 2.2]\label{lem:rem-pendant}
    Let $G$ be a graph on $n$ vertices and let $G'$ be a graph obtained from $G$ by deleting a pendant vertex. If $m_{G'}[0,1) \geq k$ then $m_{G}[0,1) \geq k$. 
\end{lemma}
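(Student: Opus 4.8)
The plan is to deduce the claim from the edge-interlacing inequality of Theorem~\ref{lem:interlacing} by passing through the intermediate graph obtained from $G$ by deleting only the single edge incident with the pendant vertex. First I would set up notation: let $v$ be the pendant vertex that is removed to form $G'$, let $u$ be its unique neighbor in $G$, and put $e=uv$. The key structural observation is that $G-e$ is the disjoint union of $G'$ and the isolated vertex $v$, since severing $e$ detaches $v$ from the rest of the graph without disturbing any other adjacency. Consequently $L(G-e)$ is, up to a simultaneous permutation of rows and columns, the block-diagonal matrix $L(G')\oplus[0]$, so the Laplacian spectrum of $G-e$ is exactly the spectrum of $G'$ together with one additional eigenvalue $0$.

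Next I would translate the hypothesis into a statement about $G-e$. Because the extra eigenvalue $0$ lies in $[0,1)$, the spectral description above gives $m_{G-e}[0,1)=m_{G'}[0,1)+1$. Hence the assumption $m_{G'}[0,1)\geq k$ yields $m_{G-e}[0,1)\geq k+1$. Since the eigenvalues are indexed in non-decreasing order, having at least $k+1$ of them in $[0,1)$ forces $\mu_{k+1}(G-e)<1$.

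Finally I would invoke the interlacing bound. Applying Theorem~\ref{lem:interlacing} with $i=k$ gives $\mu_k(G)\leq\mu_{k+1}(G-e)<1$, so that $\mu_1(G)\leq\cdots\leq\mu_k(G)<1$. Thus at least $k$ Laplacian eigenvalues of $G$ lie in $[0,1)$; that is, $m_G[0,1)\geq k$, as desired.

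The only genuine content is the bookkeeping in the middle step: recognizing that passing from $G'$ to $G-e$ introduces precisely one new eigenvalue $0$, which supplies the $+1$ that exactly compensates for the index shift in the interlacing inequality. I do not expect a serious obstacle here; the main point requiring care is the \emph{direction} of the inequality in Theorem~\ref{lem:interlacing}, so that the strict bound $\mu_{k+1}(G-e)<1$ transfers to a strict bound on $\mu_k(G)$ rather than on a higher-indexed eigenvalue of $G$.
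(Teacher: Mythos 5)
Your proof is correct: the decomposition $G-e = G' \cup \{v\}$, the count $m_{G-e}[0,1)=m_{G'}[0,1)+1 \geq k+1$, and the application of Theorem~\ref{lem:interlacing} with $i=k$ to get $\mu_k(G)\leq\mu_{k+1}(G-e)<1$ are all sound (noting that $k\leq n-1$ holds automatically since $G'$ has only $n-1$ eigenvalues). The paper itself states this lemma without proof, citing \cite{Guo}, and your argument is exactly the standard one via the edge-interlacing theorem that the cited source uses, so there is nothing to flag.
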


\begin{figure}
    \centering
    \includegraphics[width=16cm]{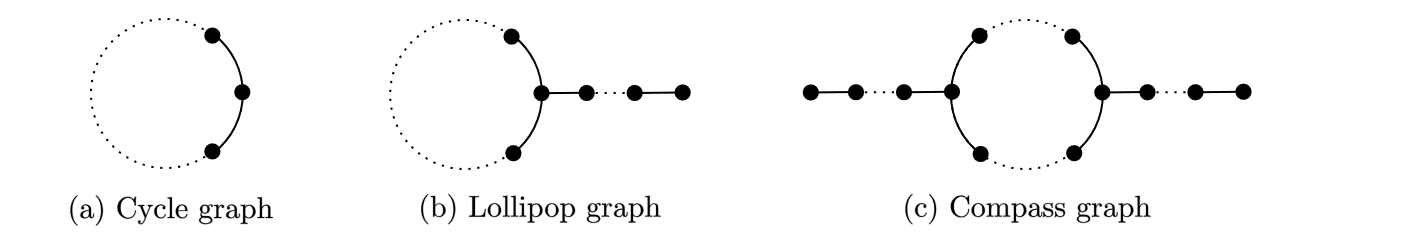}
    \caption{Three types of unicyclic graphs}
    \label{fig:types}
\end{figure}

Let $G$ be a unicyclic graph with $n$ vertices and girth $r \geq 3$.
Then $G$ has $n$ edges and consists of a cycle $C_r$ of length $r~(3 \leq r \leq n)$ and disjoint maximal trees $T_1,\ldots, T_m ~(0 \leq m \leq r)$ such that each tree has exactly one vertex in common with  $C_r$.
Thus, any unicyclic graph $G$ can be obtained by attaching pendant vertices to the minimal unicyclic subgraph of $G$ that contains a diametral path of $G$.
By Lemma~\ref{lem:rem-pendant}, it is enough to consider the minimal unicyclic subgraph of $G$ that contains a diametral path of $G$, which are given in Figure~\ref{fig:types}.
The {\it lollipop graph} is obtained by appending a vertex of the cycle $C_r$ to an end vertex of the path $P_{n-r}$.
The lollipop graph with $n$ vertices and girth $r$ is denoted by $C_{n,r}$.
In order to define the compass graph, we  label the $n$ vertices of the graph as shown in Figure~\ref{fig:compass}. 
Let $t$ and $s$ be positive integers such that $t+s=n-r$. A compass graph consists of a cyclic graph $C_r$ with paths $P_t$ and $P_s$ attached. To be more precise, the {\it compass graph} is obtained by attaching an end vertex of the path $P_t$ to a degree 2 cycle vertex of the lollipop graph $C_{n-t,r}$ such that its distance from the degree 3 vertex is $r'$. We denote this graph by $C_{n,r}(r',t)$. Note that the diameter of $C_{n,r}(r',t)$ is $d(C_{n,r}(r',t))=r'+t+s$. 

\begin{figure}
    \centering
    \includegraphics[width=16cm]{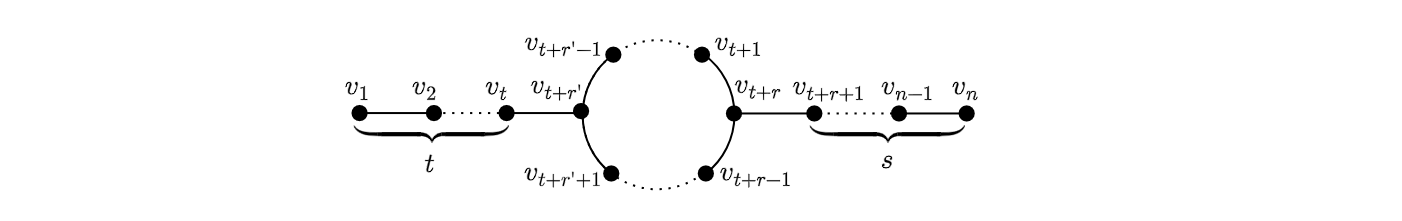}
    \caption{The compass graph $C_{n,r}(r't)$}
    \label{fig:compass}
\end{figure}

Let $G$ be a unicyclic graph and let  $G'$ be the minimal unicyclic subgraph of $G$ that contains a diametral path of $G$.
We will illustrate how to deduce $G'$ from $G$ and conclude that it is sufficient to find the lower bound of $m_{G'}[0,1)$.
\begin{example}
    Let $G$ be a graph with 19 vertices as shown on the left in Figure~\ref{fig:ex1}.
    The diameter of $G$ is 8 and the girth of $G$ is 8.
    The path consisting of the white vertices is the diametral path of $G$.
    Then $G'$ is the lollipop graph $C_{12,8}$.
    Suppose that $m_{C_{12,8}}[0,1)\geq k$. Then, by Lemma~\ref{lem:rem-pendant}, $m_G[0,1) \geq k$.
    Our bound for $m_{C_{12,8}}[0,1)$ is  
    \begin{equation*}
        m_{C_{12,8}}[0,1)\geq \bigg\lceil\frac{8}{3}\bigg\rceil+ \bigg\lceil\frac{8}{6}\bigg\rceil -1=4.
    \end{equation*}
    Hence $m_G[0,1) \geq 4$. In fact, $m_{C_{12,8}}[0,1)=4$ and $m_G[0,1)=6$.
    \begin{figure}[h!t!]
    \centering
    \includegraphics[width=16cm]{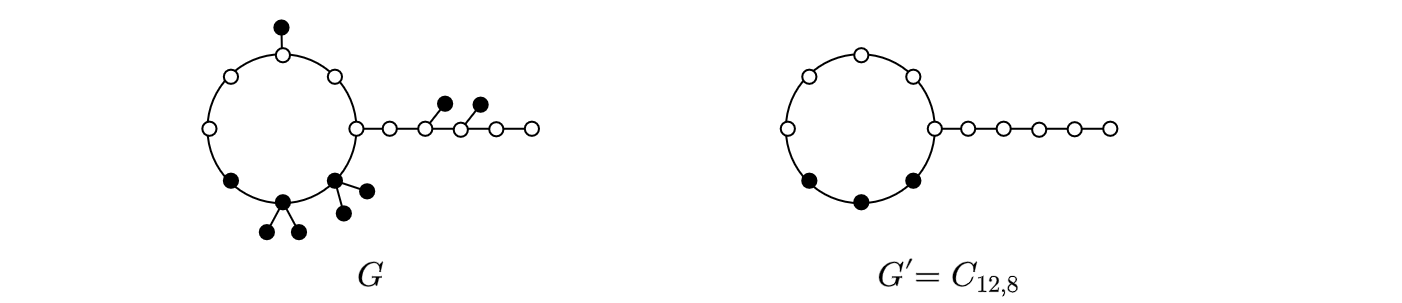}
    \caption{Reducing to lollipop graph}
    \label{fig:ex1}
\end{figure}
\end{example}
\begin{example}
    Let $G$ be a graph with 24 vertices as shown on the left in Figure~\ref{fig:ex2}.
    The diameter of $G$ is 10 and the girth of $G$ is 8. 
    The path consisting of the white vertices is the diametral path of $G$.
    Then $G'$ is the compass graph $C_{14,8}(4,3)$.
    Our bound for $m_{C_{14,8}(4,3)}[0,1)$ is
    \begin{equation*}
        m_{C_{14,8}(4,3)}[0,1)\geq \bigg\lceil\frac{10}{3}\bigg\rceil+ \bigg\lceil\frac{8}{6}\bigg\rceil -1=5. 
    \end{equation*}
    Hence $m_G[0,1)\geq 5$. In fact, $m_{C_{14,8}(4,3)}[0,1)=5$ and $m_G[0,1)=8$.
    \begin{figure}[h!t!]
    \centering
    \includegraphics[width=16cm]{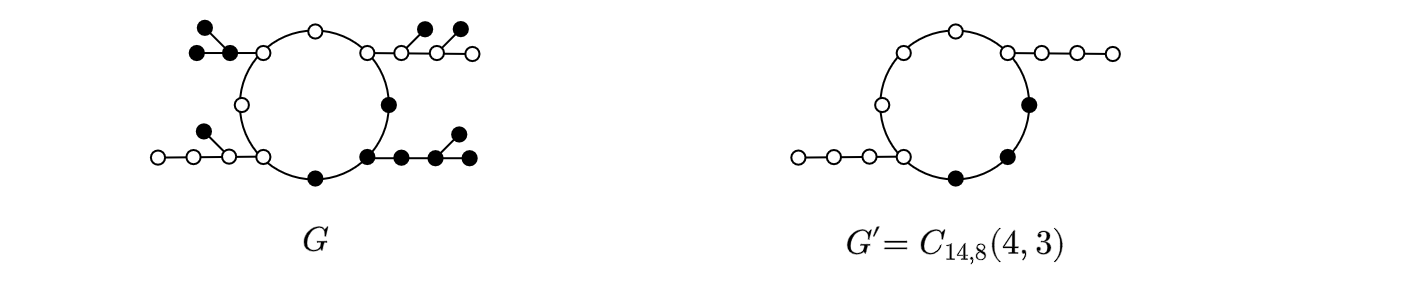}
    \caption{Reducing to compass graph}
    \label{fig:ex2}
\end{figure}
\end{example}

Now, we list some facts and lemmas which are used to prove our main results.
For an $n \times n$ matrix $M$, we denote $\mu_i(M)$ as the $i$th smallest eigenvalue of $M$. 
\begin{lemma}[\cite{Horn} Theorem 4.3.1]\label{lem:eig_sum}
    Let $A$ and $B$ be $n \times n$ Hermitian matrices. 
    For the integers $i$ and $j$ $(1\leq i,j\leq n)$ satisfying $1 \leq i+j-n \leq n$,
    \begin{equation*}
        \mu_{i+j-n}(A+B) \leq \mu_i(A)+\mu_j(B).
    \end{equation*}
    Moreover, equality holds if and only if there exists a unit vector $x$ such that $Ax=\mu_i(A)x$, $Bx=\mu_j(B)x$, and $(A+B)x=\mu_{i+j-n}(A+B)x$.
\end{lemma}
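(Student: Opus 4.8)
The statement is a classical Weyl-type inequality for sums of Hermitian matrices, so the plan is to derive both the inequality and its equality case from the Courant--Fischer variational characterization of eigenvalues. Recall that for an $n \times n$ Hermitian matrix $M$ with eigenvalues indexed in non-decreasing order, the $k$th smallest eigenvalue satisfies
\[
\mu_k(M) = \min_{\dim S = k}\ \max_{0 \neq x \in S} \frac{x^* M x}{x^* x},
\]
the minimum being attained on the span of eigenvectors belonging to the $k$ smallest eigenvalues of $M$. Everything will follow by choosing good test subspaces.

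First I would set up the two extremal subspaces. Let $U$ be the $i$-dimensional span of orthonormal eigenvectors of $A$ attached to $\mu_1(A),\ldots,\mu_i(A)$, so that $x^* A x \le \mu_i(A)\,x^*x$ for every $x \in U$; similarly let $V$ be the $j$-dimensional span of eigenvectors of $B$ for $\mu_1(B),\ldots,\mu_j(B)$, so that $x^* B x \le \mu_j(B)\,x^*x$ on $V$. Since $U$ and $V$ are subspaces of $\mathbb{C}^n$, the dimension formula gives $\dim(U \cap V) \ge i+j-n \ge 1$, so I may fix a subspace $S \subseteq U \cap V$ with $\dim S = i+j-n$. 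For any $x \in S$ I would add the two Rayleigh bounds to obtain $x^*(A+B)x = x^*Ax + x^*Bx \le \bigl(\mu_i(A)+\mu_j(B)\bigr)x^*x$, and then apply Courant--Fischer to $A+B$ with this particular test subspace:
\[
\mu_{i+j-n}(A+B) \le \max_{0\neq x\in S}\frac{x^*(A+B)x}{x^*x} \le \mu_i(A)+\mu_j(B),
\]
which is the claimed inequality.

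For the equality statement, the ``if'' direction is immediate: a common unit eigenvector $x$ satisfying the three stated eigenvalue equations gives $\mu_{i+j-n}(A+B) = x^*(A+B)x = x^*Ax + x^*Bx = \mu_i(A)+\mu_j(B)$. The ``only if'' direction is where the real work lies, and it is the step I expect to be the main obstacle. Assuming equality, the displayed chain above collapses, forcing $\max_{0\neq x\in S}\frac{x^*(A+B)x}{x^*x} = \mu_i(A)+\mu_j(B)$. I would take a unit maximizer $x_0 \in S$, which exists by compactness of the unit sphere in $S$; then $x_0^*Ax_0 + x_0^*Bx_0 = \mu_i(A)+\mu_j(B)$, while $x_0 \in U$ and $x_0 \in V$ force $x_0^*Ax_0 \le \mu_i(A)$ and $x_0^*Bx_0 \le \mu_j(B)$ separately, so both inequalities must in fact be equalities. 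The key remaining point is to upgrade the Rayleigh equality $x_0^*Ax_0 = \mu_i(A)$ to the honest eigenvector equation $Ax_0 = \mu_i(A)x_0$: expanding $x_0 = \sum_l c_l u_l$ in the eigenbasis spanning $U$ and using $x_0^*Ax_0 = \sum_l |c_l|^2 \mu_l(A) = \mu_i(A)$ shows that the coefficients $c_l$ must vanish except on eigenvectors with eigenvalue exactly $\mu_i(A)$, whence $Ax_0 = \mu_i(A)x_0$; the same argument on $V$ gives $Bx_0 = \mu_j(B)x_0$. Adding these yields $(A+B)x_0 = \bigl(\mu_i(A)+\mu_j(B)\bigr)x_0 = \mu_{i+j-n}(A+B)x_0$, so $x = x_0$ is the required common eigenvector. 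The delicate bookkeeping is precisely this last upgrade: the extremal subspaces must be chosen so that the Rayleigh-quotient equalities genuinely pin down true eigenvectors rather than merely near-extremal vectors, which is exactly what the eigenbasis expansion settles.
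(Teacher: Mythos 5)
Your proof is correct: the inequality via Courant--Fischer applied to an $(i+j-n)$-dimensional subspace of $U\cap V$, and the equality case via a unit maximizer whose Rayleigh-quotient equalities are upgraded to genuine eigenvector equations by eigenbasis expansion, is exactly the classical argument for Weyl's theorem with equality conditions. The paper itself gives no proof --- it quotes this as Theorem 4.3.1 of Horn and Johnson --- and your argument is essentially the one found there, so there is nothing to flag.
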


\begin{lemma}[\cite{Grone} Theorem 3.1]
    Let $G$ be a graph with $n$ vertices and suppose that $H$ is a graph obtained from $G$ and $P_3$ by joining a pendant vertex of $P_3$ to a vertex of $G$.
    Then $m_H(1)=m_G(1)$.
\end{lemma}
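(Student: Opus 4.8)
The plan is to work directly with the eigenvalue-one eigenspaces of the two Laplacians and to exhibit an explicit linear isomorphism between them. Writing the $P_3$ as a path $a - b - c$ whose pendant vertex $a$ is joined to a vertex $v$ of $G$, I would recast the eigenvector equation $L(H)x = x$ at each vertex $u$ in the \emph{local} form $(d_u - 1)x_u = \sum_{w \sim u} x_w$, where $d_u$ is the degree of $u$ in $H$. This reformulation is the engine of the argument: the equations at all vertices of $G$ other than $v$ are literally unchanged in passing from $G$ to $H$, so the only places where the two graphs can differ are the three new vertices $a, b, c$ and the attachment vertex $v$, whose degree increases by one.

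First I would solve the equations along the attached path. At the pendant vertex $c$ the equation reads $0 \cdot x_c = x_b$, forcing $x_b = 0$; at $b$ it gives $x_b = x_a + x_c$, hence $x_c = -x_a$; and at $a$ it gives $x_a = x_v + x_b = x_v$. Thus the values on all of $P_3$ are rigidly determined by the single scalar $x_v$, namely $(x_a, x_b, x_c) = (x_v, 0, -x_v)$.

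The decisive step is the equation at $v$. In $H$ the vertex $v$ has degree $d_v^G + 1$, so its equation is $d_v^G x_v = x_a + \sum_{w \sim_G v} x_w$; substituting $x_a = x_v$ cancels the extra contribution and leaves exactly $(d_v^G - 1)x_v = \sum_{w \sim_G v} x_w$, which is precisely the eigenvalue-one equation for $L(G)$ at $v$. Consequently the restriction map $x \mapsto x|_{V(G)}$ carries the $1$-eigenspace of $L(H)$ into that of $L(G)$. I would then check this map is a bijection: it is injective because $x|_{V(G)} = 0$ forces $x_v = 0$ and hence $x_a = x_b = x_c = 0$; and it is surjective because any $1$-eigenvector $y$ of $L(G)$ extends to $H$ by placing $(y_v, 0, -y_v)$ on $(a, b, c)$, the verification at $a, b, c$ and at $v$ being exactly the computation above run in reverse. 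Equality of the eigenspace dimensions then gives $m_H(1) = m_G(1)$.

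I do not anticipate a serious obstacle here; the single point that needs care is the bookkeeping of the degree increase at $v$, since it is exactly the $+1$ in $d_v^G + 1$ that is absorbed by the term $x_a = x_v$ and makes the reduction to the $G$-equation exact. The argument is moreover insensitive to which pendant vertex of $P_3$ is attached and to the choice of $v$ in $G$, so no case analysis is required.
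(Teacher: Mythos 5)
Your proof is correct, and it is complete as stated. Note that the paper does not actually prove this lemma: it imports it verbatim as Theorem~3.1 of Grone--Merris--Sunder \cite{Grone}, so there is no internal proof to compare against. Your argument is a clean, self-contained alternative: rewriting $L(H)x = x$ vertex-locally as $(d_u-1)x_u = \sum_{w\sim u} x_w$, you correctly deduce that any $1$-eigenvector of $L(H)$ is forced to take the values $(x_a,x_b,x_c)=(x_v,0,-x_v)$ on the attached path, that the extra term $x_a$ at $v$ exactly cancels the degree increment $d_v^G+1$, and hence that restriction to $V(G)$ is a well-defined linear map between $1$-eigenspaces; your injectivity and surjectivity checks are both right, so the eigenspace dimensions agree (including the degenerate case where both are zero). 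What your approach buys beyond the citation is an explicit eigenspace isomorphism, which is very much in the spirit of how the paper itself manipulates explicit $1$-eigenvectors elsewhere (Lemma~\ref{lem:path-one}, Lemma~\ref{lem:cylce-one}, Proposition~\ref{prop:r=01-lollipop}); indeed the forced pattern $(x_v,0,-x_v)$ on the pendant $P_3$ is precisely the $6$-periodic sign pattern of Lemma~\ref{lem:path-one}. The one point worth stating explicitly in a final write-up is that the restriction map is linear (so that equal dimensions, not merely a bijection of sets, is what you establish), but this is immediate from your construction.
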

\begin{corollary}\label{coro:path}
    Let $G$ be a graph with $n$ vertices and let $m$ be a positive integer divisible by $3$. 
    Suppose that $H$ is a graph obtained from $G$ and $P_m$ by joining  a pendant vertex of $P_m$ to a vertex of $G$.
    Then $m_H(1)=m_G(1)$.
\end{corollary}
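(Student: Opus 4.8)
The plan is to reduce the general case to the already-established single-$P_3$ case via induction on the number of copies of $P_3$ making up the path. Since $m$ is divisible by $3$, I would write $m=3k$ for a positive integer $k$, fix the vertex $v\in V(G)$ to which the pendant vertex of $P_m$ is joined, and for each $1\le j\le k$ let $H_j$ denote the graph obtained from $G$ by joining a pendant vertex of $P_{3j}$ to $v$. With this notation $H_k=H$, and the goal becomes showing $m_{H_j}(1)=m_G(1)$ for all such $j$, the case $j=k$ being the assertion.

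The base case $j=1$ is exactly the content of the preceding lemma (\cite[Theorem 3.1]{Grone}) applied to $G$, giving $m_{H_1}(1)=m_G(1)$. For the inductive step (from $j-1$ to $j$, with $j\ge 2$), the key observation is that $H_j$ can be reconstructed from $H_{j-1}$ by attaching one further $P_3$ at the free end of the path already present. Concretely, I would label the attached path of $H_{j-1}$ as $w_1 w_2\cdots w_{3(j-1)}$ with $w_1$ joined to $v$, so that $w_{3(j-1)}$ is a pendant vertex of $H_{j-1}$. Joining a pendant vertex of a fresh $P_3$ to $w_{3(j-1)}$ concatenates the two paths into a single path on $3(j-1)+3=3j$ vertices attached at $v$, which is precisely $H_j$. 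Applying the preceding lemma with $H_{j-1}$ in the role of the base graph then yields $m_{H_j}(1)=m_{H_{j-1}}(1)$.

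Chaining these equalities gives
\begin{equation*}
m_H(1)=m_{H_k}(1)=m_{H_{k-1}}(1)=\cdots=m_{H_1}(1)=m_G(1),
\end{equation*}
which is the claim. I do not anticipate a genuine obstacle: the only point requiring care is the bookkeeping in the inductive step, namely confirming that concatenating $P_{3(j-1)}$ with a newly attached $P_3$ at a pendant vertex reproduces $P_{3j}$ attached at the same vertex $v$, so that each invocation of the lemma is valid. The divisibility hypothesis on $m$ is used exactly here, to guarantee that the path decomposes into an integer number of $P_3$ blocks.
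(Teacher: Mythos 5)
Your proof is correct and is exactly the argument the paper intends (the corollary is stated without proof as an immediate consequence of the preceding lemma): writing $m=3k$ and inductively attaching one $P_3$ at a time to the free end of the path already present, each step invoking the lemma with the current graph playing the role of the base graph. The bookkeeping in your inductive step is sound, since joining a pendant vertex of a fresh $P_3$ to the pendant endpoint $w_{3(j-1)}$ does reconstruct $P_{3j}$ attached at $v$, so every application of the lemma is legitimate.
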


\begin{remark}
    Since the Laplacian eigenvalues of $P_n$ are
    \begin{equation*}
        2-2\cos{\frac{k\pi}{n}},
    \end{equation*}
    where $k=0,\ldots,n-1$, we have
    \begin{itemize}
        \item[(a)] $m_{P_n}[0,1)=\left\lceil \frac{n}{3} \right\rceil$,
        \item[(b)] $3 \,|\, n$ if and only if $m_{P_n}(1)=1$.
    \end{itemize}
\end{remark}
  
\begin{lemma}\label{lem:path-one}
    Let $P_n$ be the path graph with $n$ vertices and let  $x=\begin{bmatrix} x_1 & \cdots & x_n \end{bmatrix}^T$ be a vector of order $n$, where
    \begin{equation*}
        x_i =\left\{\begin{array}{rl}
                1, & \mbox{if $i\equiv 1,~ 6 \pmod{6}$,}\\
                0, & \mbox{if $i\equiv 2,~ 5 \pmod{6}$,}\\
                -1, & \mbox{if $i\equiv 3,~ 4 \pmod{6}$.}
              \end{array}\right.
    \end{equation*}
    If $n$ is divisible by 3, then the vector $x$
    is an eigenvector corresponding to the Laplacian eigenvalue $1$ of $P_n$.
\end{lemma}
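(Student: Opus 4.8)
The plan is to verify directly that $L(P_n)x = x$ by examining the eigenvector equation one coordinate at a time, distinguishing the two endpoints from the internal vertices. Labeling $P_n$ so that vertex $i$ is adjacent to vertices $i-1$ and $i+1$, the Laplacian acts as $(L(P_n)x)_1 = x_1 - x_2$ at the left endpoint, as $(L(P_n)x)_n = x_n - x_{n-1}$ at the right endpoint, and as $(L(P_n)x)_i = 2x_i - x_{i-1} - x_{i+1}$ for each internal vertex $2 \le i \le n-1$. Setting each of these equal to $x_i$ produces exactly one scalar equation per vertex, and the task reduces to checking all of them.

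For the internal vertices, the condition $2x_i - x_{i-1} - x_{i+1} = x_i$ is equivalent to the recurrence $x_{i+1} = x_i - x_{i-1}$. First I would observe that the block $1,0,-1,-1,0,1$ defining $x$ on each run of six consecutive indices satisfies this recurrence; since the pattern repeats with period $6$, this can be confirmed by a finite case-check over the residues $i \equiv 1,2,3,4,5,0 \pmod 6$. Consequently every internal equation holds, independently of $n$.

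The two endpoint equations are where the hypothesis $3 \mid n$ enters. At the left endpoint the requirement $x_1 - x_2 = x_1$ collapses to $x_2 = 0$, which holds automatically because $2 \equiv 2 \pmod 6$. At the right endpoint the requirement $x_n - x_{n-1} = x_n$ collapses to $x_{n-1} = 0$, and this is precisely where divisibility by $3$ is used: writing $n \equiv 0 \pmod 3$ as $n \equiv 0$ or $n \equiv 3 \pmod 6$ yields $n-1 \equiv 5$ or $n-1 \equiv 2 \pmod 6$, so $x_{n-1} = 0$ by the definition of $x$ in either case.

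Assembling these observations, all $n$ coordinate equations of $L(P_n)x = x$ are satisfied, and since $x_1 = 1$ the vector $x$ is nonzero, so it is a genuine eigenvector for the eigenvalue $1$. I expect the only real subtlety to be the right-endpoint equation together with the observation that $3 \mid n$ is exactly the condition forcing $x_{n-1}$ to vanish; the internal recurrence and the left endpoint are routine verifications.
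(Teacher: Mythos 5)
Your proof is correct and takes the same approach as the paper, which simply states that one can verify $L(P_n)x = x$ directly; you have carried out that verification explicitly, correctly reducing the internal vertices to the recurrence $x_{i+1} = x_i - x_{i-1}$ and isolating the right endpoint as the only place where $3 \mid n$ is needed.
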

\begin{proof}
    One can immediately verify that $L(P_n)x = x$.
\end{proof}
We denote the characteristic polynomial of a matrix $M$ by $\phi(M)=\phi(M;x)=\det(xI-M)$, where $I$ is the identity matrix. If $M=L(G)$, then we will write $\phi(L(G))$ as $\phi(G)$.
For a vertex $v$ of $G$, let $L_v(G)$ be the principal submatrix of $L(G)$ formed by deleting the row and column corresponding to the vertex $v$.
Let $B_n$ be the matrix of order $n$ obtained from $L(P_{n+1})$ by deleting the row and column corresponding to one of the end vertices of $P_{n+1}$. Let $H_n$ be the matrix of order $n$ obtained from $L(P_{n+2})$ by deleting the rows and columns corresponding to both end vertices of $P_{n+2}$.
\begin{lemma}[\cite{GuoJM05} Lemma 8]\label{lem:char-joining}
    Let $G$ be the graph obtained by joining the vertex $u$ of the graph $G_1$ to the vertex $v$ of the graph $G_2$ by an edge. Then
    \begin{equation*}
        \phi(G)=\phi(G_1)\phi(G_2)-\phi(G_1)\phi(L_v(G_2))-\phi(G_2)\phi(L_u(G_1)).
    \end{equation*}
\end{lemma}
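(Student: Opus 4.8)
The plan is to realize $L(G)$ as a rank-one perturbation of the block-diagonal Laplacian of the disjoint union $G_1 \sqcup G_2$, and then evaluate the characteristic determinant via the matrix determinant lemma. Concretely, order the vertices of $G$ so that those of $G_1$ come first, and let $e_u,e_v$ denote the standard basis vectors supported at $u$ (in the $G_1$ block) and $v$ (in the $G_2$ block). Adding the single edge $uv$ raises the degrees of $u$ and $v$ each by one and places $-1$ in the $(u,v)$ and $(v,u)$ positions, so that $L(G)=\big(L(G_1)\oplus L(G_2)\big)+(e_u-e_v)(e_u-e_v)^T$. Writing $M=xI-\big(L(G_1)\oplus L(G_2)\big)$, which is block diagonal with blocks $xI-L(G_1)$ and $xI-L(G_2)$, we obtain $\phi(G)=\det\big(M-(e_u-e_v)(e_u-e_v)^T\big)$.

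Next I would apply the matrix determinant lemma $\det(M-ww^T)=\det(M)\,(1-w^TM^{-1}w)$ with $w=e_u-e_v$. Since this requires $M$ to be invertible, I treat the claim as an identity of polynomials in $x$: it suffices to verify it for all but finitely many $x$ (those where $\det M\neq 0$), and polynomial equality then follows. Here $\det(M)=\phi(G_1)\phi(G_2)$ because $M$ is block diagonal. The key quantity is the quadratic form $w^TM^{-1}w$; because $M^{-1}$ inherits the block-diagonal structure and $e_u,e_v$ live in different blocks, the cross terms vanish and $w^TM^{-1}w=(M^{-1})_{uu}+(M^{-1})_{vv}$.

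The step that needs the most care is identifying these diagonal entries with the stated minors. Since $M$ is block diagonal, $(M^{-1})_{uu}$ equals the $(u,u)$ entry of $(xI-L(G_1))^{-1}$, which by Cramer's rule is $\det(xI-L_u(G_1))/\det(xI-L(G_1))$; deleting the row and column indexed by $u$ from $L(G_1)$ leaves exactly $L_u(G_1)$, so this equals $\phi(L_u(G_1))/\phi(G_1)$. Symmetrically $(M^{-1})_{vv}=\phi(L_v(G_2))/\phi(G_2)$. Substituting gives
\[
\phi(G)=\phi(G_1)\phi(G_2)\left(1-\frac{\phi(L_u(G_1))}{\phi(G_1)}-\frac{\phi(L_v(G_2))}{\phi(G_2)}\right),
\]
and clearing denominators yields the claimed identity.

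An alternative, fully elementary route would avoid inverses altogether by expanding $\det(xI-L(G))$ through multilinearity in the two columns indexed by $u$ and $v$, splitting off the edge contribution; I expect the bookkeeping of the resulting minors to be the main obstacle there, since one must track how the coupling entries interact. By contrast, the rank-one approach isolates the only genuine subtlety, namely the cofactor identification above, and disposes of the singular values of $x$ cleanly through the polynomial-identity argument.
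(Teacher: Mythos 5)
Your proof is correct. Note, however, that the paper itself offers no proof of this statement: it is imported verbatim as Lemma 8 of the cited reference [GuoJM05], so there is no in-paper argument to compare against. Your route --- writing $L(G)=\bigl(L(G_1)\oplus L(G_2)\bigr)+(e_u-e_v)(e_u-e_v)^T$ and applying the matrix determinant lemma --- is a clean, self-contained alternative to the classical derivations of such edge-joining formulas, which typically proceed by cofactor expansion of $\det\bigl(xI-L(G)\bigr)$ along the rows indexed by $u$ and $v$ (exactly the bookkeeping-heavy route you mention and avoid). Each step checks out: the rank-one identification of the added edge is exactly the Laplacian of a single edge supported on $\{u,v\}$; the cross terms in $w^TM^{-1}w$ vanish by block-diagonality since $u$ and $v$ lie in different blocks; the diagonal entries of the inverse are correctly identified via Cramer's rule with $\phi\bigl(L_u(G_1)\bigr)/\phi(G_1)$ and $\phi\bigl(L_v(G_2)\bigr)/\phi(G_2)$, because deleting the row and column of $xI-L(G_1)$ indexed by $u$ yields $xI-L_u(G_1)$; and the restriction to invertible $M$ is properly discharged by the polynomial-identity argument, since both sides are polynomials in $x$ agreeing off the finitely many eigenvalues of $L(G_1)\oplus L(G_2)$. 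Clearing denominators reproduces the stated formula with the signs matching. What your approach buys is that the only genuinely graph-theoretic input is the rank-one structure of an added edge; everything else is generic linear algebra, so the same argument proves the analogous identity for any symmetric matrices perturbed by joining two index sets with a single edge-type update.
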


\begin{lemma}[\cite{GuoJM08} Lemma 2.8]\label{lem:char-PC}
Set $\phi(P_0)=0$, $\phi(B_0)=1$, $\phi(H_0)=1$. We have
\begin{itemize}
    \item[(a)] $x\phi(B_n)=\phi(P_{n+1})+\phi(P_n)$;
    \item[(b)] $\phi(P_n)=x\phi(H_{n-1})$, $(n\geq 1)$;
    \item[(c)] $\phi(C_n)=\frac{1}{x}\phi(P_{n+1})-\frac{1}{x}\phi(P_{n-1})+2(-1)^{n+1}$, $(n \geq 3, x\neq 0)$.
\end{itemize}
\end{lemma}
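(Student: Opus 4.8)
The plan is to reduce everything to determinants of explicit (cyclic) tridiagonal matrices and to exploit the three–term recurrences they satisfy. Writing $p_n=\phi(P_n)$, $b_n=\phi(B_n)$, $h_n=\phi(H_n)$, I first record the shapes of the underlying matrices. Reading the diagonal of $xI-L(\cdot)$ off the degree sequences, $xI-L(P_n)$ is tridiagonal with diagonal $(x-1,x-2,\dots,x-2,x-1)$ and off-diagonal entries $1$; $xI-B_n$ is the same but with diagonal $(x-2,\dots,x-2,x-1)$; $xI-H_n$ has constant diagonal $(x-2,\dots,x-2)$; and $xI-L(C_n)$ is the cyclic version of $xI-H_n$, i.e.\ constant diagonal $x-2$, unit sub/super-diagonal, and two extra unit entries in the $(1,n)$ and $(n,1)$ corners.

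First I would expand each determinant along a first or last row to obtain the standard tridiagonal recurrences. This yields $h_n=(x-2)h_{n-1}-h_{n-2}$ and $b_n=(x-2)b_{n-1}-b_{n-2}$, together with the cross relations $p_{n+1}=(x-1)b_n-b_{n-1}$ (expanding along the degree-$1$ end) and $b_n=(x-1)h_{n-1}-h_{n-2}$. All of these are routine cofactor expansions, and I would fix the base cases $p_1=x$, $p_2=x(x-2)$, $h_0=1$, $h_1=x-2$, $b_0=1$ to match the stated normalisation.

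Part (a) then follows by adding $p_{n+1}=(x-1)b_n-b_{n-1}$ and $p_n=(x-1)b_{n-1}-b_{n-2}$ and using the $b$–recurrence to rewrite $(x-2)b_{n-1}-b_{n-2}=b_n$, which collapses the sum to $xb_n$. For part (b) I would observe that $p_n$ and $xh_{n-1}$, regarded as sequences in $n$, both satisfy $f_n=(x-2)f_{n-1}-f_{n-2}$ (the former inherited from the $b$–recurrence, the latter because $h_n$ satisfies it and multiplication by $x$ preserves the recurrence), and that they agree at $n=1,2$; hence they coincide for all $n\ge 1$. Equivalently, since $L(P_n)$ is singular one has $x\mid p_n$, and the cofactor $p_n/x$ is exactly $h_{n-1}$.

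The main work is part (c). I would compute $\det(xI-L(C_n))$ by a Laplace expansion along the first row, which has exactly three nonzero entries: $x-2$ in the diagonal, $1$ in the adjacent position, and $1$ in the corner $(1,n)$. The diagonal cofactor is the tridiagonal minor $h_{n-1}$. Each of the other two cofactors splits, after a further expansion along the column that still meets the corner, into a tridiagonal minor equal to $h_{n-2}$ plus a ``wrap-around'' minor whose value is $\pm 1$, arising from the product of the unit off-diagonals running once around the cycle. Assembling the cofactor signs, the two constant contributions combine to $2(-1)^{n+1}$ and the determinant becomes $(x-2)h_{n-1}-2h_{n-2}+2(-1)^{n+1}$. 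Applying the $h$–recurrence gives $(x-2)h_{n-1}-2h_{n-2}=h_n-h_{n-2}$, and finally part (b) in the form $h_{m-1}=p_m/x$ converts this into $\tfrac1x p_{n+1}-\tfrac1x p_{n-1}$, which is the claimed identity. I expect the sign bookkeeping for the two corner minors—making them sum precisely to $2(-1)^{n+1}$ rather than, say, $0$—to be the only genuinely delicate point.
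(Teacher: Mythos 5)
The paper never proves this lemma: it is quoted verbatim from \cite{GuoJM08}, so there is no internal argument to compare yours against, and your proposal has to stand on its own. It does stand. Your matrix shapes and the four relations $h_n=(x-2)h_{n-1}-h_{n-2}$, $b_n=(x-2)b_{n-1}-b_{n-2}$, $p_{n+1}=(x-1)b_n-b_{n-1}$, $b_n=(x-1)h_{n-1}-h_{n-2}$ are all correct; the telescoping in (a) is valid for $n\ge 2$ (the cases $n=0,1$ follow directly from the conventions $\phi(P_0)=0$, $\phi(B_0)=1$); and the argument for (b) --- both sides satisfy the same three-term recurrence and agree at $n=1,2$ --- is sound. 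The sign bookkeeping you flagged as the delicate point of (c) does come out as you assert: writing $M=xI-L(C_n)$ and $M^{(1,j)}$ for the minor deleting row $1$ and column $j$, the first-row expansion gives $\det M=(x-2)h_{n-1}-\det M^{(1,2)}+(-1)^{n+1}\det M^{(1,n)}$, and the two corner minors evaluate to $\det M^{(1,2)}=h_{n-2}+(-1)^{n}$ and $\det M^{(1,n)}=(-1)^{n}h_{n-2}+1$, each splitting into a tridiagonal block and a triangular wrap-around block of determinant $1$. The constants therefore add to $2(-1)^{n+1}$ rather than cancel, yielding
\begin{equation*}
\phi(C_n)=(x-2)h_{n-1}-2h_{n-2}+2(-1)^{n+1}=h_n-h_{n-2}+2(-1)^{n+1},
\end{equation*}
which is (c) after applying (b); a sanity check at $n=3$ gives $(x-2)^3-3(x-2)+2=x(x-3)^2=\phi(C_3)$, as it should. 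One blemish you should fix: the ``equivalently'' aside in your treatment of (b) is circular, not an alternative proof. The matrix $H_{n-1}$ is a principal submatrix of $L(P_{n+1})$, not of $L(P_n)$; the end-vertex cofactor of $L(P_n)$ is $b_{n-1}$, and no cofactor of $L(P_n)$ equals $h_{n-1}$ (for $n=3$, the cofactors are $x^2-3x+1$ and $(x-1)^2$, while $h_2=x^2-4x+3$). So ``$x\mid p_n$ and the cofactor $p_n/x$ is exactly $h_{n-1}$'' merely restates (b). Either delete that sentence or replace it with the spectral argument: the eigenvalues of $H_{n-1}$ are $2-2\cos\frac{k\pi}{n}$, $k=1,\dots,n-1$, which are precisely the nonzero Laplacian eigenvalues of $P_n$, whence $\phi(P_n)=x\,\phi(H_{n-1})$.
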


\begin{lemma}\label{lem:char-lollipop}
    Let $C_{n,r}$ be a lollipop graph with $n$ vertices and girth $r$. Then the characteristic polynomial of $C_{n,r}$ can be represented as
    \begin{equation*}
        \phi(C_{n,r})=\phi(P_{n-r})\bigg( \phi(C_{r})-\frac{1}{x}\phi(C_{r}) \bigg)-\frac{1}{x}\phi(C_{r})\phi(P_{n-r-1})-\frac{1}{x}\phi(P_{n-r})\phi(P_{r}).
    \end{equation*}
\end{lemma}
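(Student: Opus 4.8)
The plan is to apply Lemma~\ref{lem:char-joining} directly, viewing the lollipop graph $C_{n,r}$ as the result of joining an end vertex $u$ of the path $P_{n-r}$ to a vertex $v$ of the cycle $C_r$. Setting $G_1 = P_{n-r}$ and $G_2 = C_r$, Lemma~\ref{lem:char-joining} gives
\begin{equation*}
\phi(C_{n,r}) = \phi(P_{n-r})\phi(C_r) - \phi(P_{n-r})\phi(L_v(C_r)) - \phi(C_r)\phi(L_u(P_{n-r})).
\end{equation*}
It then remains to evaluate the two principal-submatrix polynomials $\phi(L_v(C_r))$ and $\phi(L_u(P_{n-r}))$ in terms of the path polynomials.

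The key observation is that deleting a vertex from a Laplacian matrix does not alter the diagonal entries (degrees) of the surviving vertices, so these submatrices are exactly the auxiliary matrices $B$ and $H$ introduced before Lemma~\ref{lem:char-PC}. Concretely, deleting the cycle vertex $v$ from $C_r$ leaves $r-1$ vertices, each still carrying diagonal entry $2$; this is precisely the matrix $H_{r-1}$, so $\phi(L_v(C_r)) = \phi(H_{r-1})$. Likewise, deleting the end vertex $u$ from $P_{n-r}$ leaves a path-type matrix of order $n-r-1$ in which the former neighbor of $u$ retains diagonal entry $2$; this is the matrix $B_{n-r-1}$, so $\phi(L_u(P_{n-r})) = \phi(B_{n-r-1})$.

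With these identifications, I would invoke Lemma~\ref{lem:char-PC}. Part (b) yields $\phi(H_{r-1}) = \frac{1}{x}\phi(P_r)$, and part (a) yields $\phi(B_{n-r-1}) = \frac{1}{x}\big(\phi(P_{n-r}) + \phi(P_{n-r-1})\big)$. Substituting both into the expansion above and grouping the resulting term $-\frac{1}{x}\phi(C_r)\phi(P_{n-r})$ together with $\phi(P_{n-r})\phi(C_r)$ produces the factor $\phi(P_{n-r})\big(\phi(C_r) - \frac{1}{x}\phi(C_r)\big)$, while the two remaining terms match the stated expression exactly.

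I expect the only genuine obstacle to be the careful identification of $L_v(C_r)$ and $L_u(P_{n-r})$ with $H_{r-1}$ and $B_{n-r-1}$: one must track that the diagonal entries equal the original degrees in $C_r$ and $P_{n-r}$, not the degrees in the reduced graph. Once this bookkeeping is settled, the remaining steps are a routine substitution and regrouping.
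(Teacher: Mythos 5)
Your proposal is correct and follows essentially the same route as the paper: decompose $C_{n,r}$ via Lemma~\ref{lem:char-joining} applied to $P_{n-r}$ and $C_r$, identify the principal submatrices $L_v(C_r)$ and $L_u(P_{n-r})$ with $H_{r-1}$ and $B_{n-r-1}$, and then substitute using Lemma~\ref{lem:char-PC}~(a) and~(b) and regroup. The only difference is that you make explicit the identification of the deleted-vertex submatrices with $H_{r-1}$ and $B_{n-r-1}$, a bookkeeping step the paper leaves implicit.
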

\begin{proof}
    Let $u$ be a vertex of $C_r$ and let $v$ be an end vertex of $P_{n-r}$.
    Then, by Lemma \ref{lem:char-joining},
    \begin{equation*}
        \phi(C_{n,r})=\phi(C_{r})\phi(P_{n-r})-\phi(C_{r})\phi(B_{n-r-1})-\phi(P_{n-r})\phi(H_{r-1}).
    \end{equation*}
    By Lemma \ref{lem:char-PC} (a) and (b), we have
    \begin{align*}
    \phi(C_{n,r})=\,&\phi(C_{r})\phi(P_{n-r})-\frac{1}{x}\phi(C_{r})\big( \phi(P_{n-r})+\phi(P_{n-r-1})\big)-\frac{1}{x}\phi(P_{n-r})\phi(P_r)\\
    =\,&\phi(P_{n-r})\bigg( \phi(C_{r})-\frac{1}{x}\phi(C_{r}) \bigg)-\frac{1}{x}\phi(C_{r})\phi(P_{n-r-1})-\frac{1}{x}\phi(P_{n-r})\phi(P_{r}).
    \end{align*}
\end{proof}

\section{Main results}\label{sec:main}
In this section, we establish a lower bound for $m_G[0,1)$, where $G$ represents the cycle, lollipop, and compass graphs. For the cycle graph, the exact value of $m_G[0,1)$ can be computed, as shown in the following remark.
\begin{remark}\label{rem:cycle}
    Since the Laplacian eigenvalues of $C_n$ are
    \begin{equation*}
        2-2\cos{\frac{2 \pi k}{n}},
    \end{equation*}
    where $k=0,\ldots,n-1$, we have 
    \begin{itemize}
        \item[(a)] $m_{C_n}[0,1)=2\left\lceil \frac{n}{6}\right\rceil-1$,
        \item[(b)]  $6\,|\,n$ if and only if $m_{C_n}(1)=2$.
    \end{itemize}
\end{remark}

    By Remark~\ref{rem:cycle}, we have $m_{C_n}[0,1) =2\lceil \frac{n}{6}\rceil-1 $.
    Since the diameter of $C_n$ is $\lfloor \frac{n}{2} \rfloor$, we obtain
    \begin{align*}
        &2\bigg\lceil \frac{n}{6}\bigg\rceil-1 -\bigg(\bigg\lceil\frac{1}{3}\left\lfloor \frac{n}{2} \right\rfloor\bigg\rceil+\bigg\lceil\frac{n}{6}\bigg\rceil-1\bigg)
        =\bigg\lceil \frac{n}{6}\bigg\rceil-\bigg\lceil \frac{1}{3}\left\lfloor \frac{n}{2} \right\rfloor\bigg\rceil
        \geq\bigg\lceil \frac{n}{6}\bigg\rceil-\bigg\lceil\frac{1}{3}\left\lceil \frac{n}{2} \right\rceil\bigg\rceil= 0.
    \end{align*}
Thus 
\begin{equation*}
        m_{C_n}[0,1) \geq \bigg\lceil\frac{d(C_n)}{3}\bigg\rceil+ \bigg\lceil\frac{n}{6}\bigg\rceil-1.
    \end{equation*}

\subsection{Lollipop graphs}
\begin{figure}
    \centering
    \includegraphics[width=16cm]{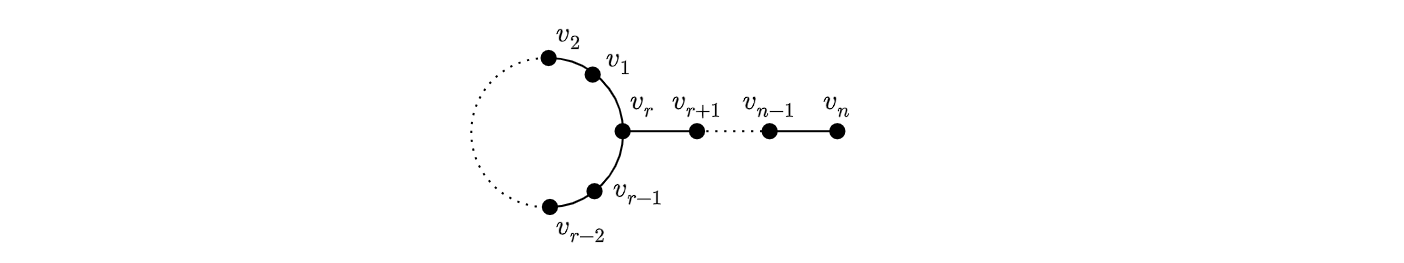}
    \caption{Lollipop graph $C_{n,r}$}
    \label{fig:lollipop}
\end{figure}

We consider the lollipop graph $C_{n,r}$ with $n$ vertices and girth $r$. We label the vertices as shown in  Figure~\ref{fig:lollipop}. Then the diameter of $C_{n,r}$ is $d(C_{n,r})=\lfloor \frac{r}{2} \rfloor +(n-r)=n-\lceil \frac{r}{2} \rceil$.
Note that  $m_{G_1\cup G_2}(\mu)=m_{G_1}(\mu)+m_{G_2}(\mu)$ for any disjoint graphs $G_1$ and $G_2$, where $\mu$ is a Laplacian eigenvalue of $G_1 \cup G_2$.
\begin{proposition}\label{prop:lollipop}
    Let $C_{n,r}$ be the lollipop graph with $n$ vertices and girth $r$. Then
\begin{equation*}
    m_{C_{n,r}}[0,1) \geq \bigg\lceil\frac{d(C_{n,r})}{3} \bigg\rceil +\bigg\lceil \frac{r}{6} \bigg\rceil -1.
\end{equation*}
\end{proposition}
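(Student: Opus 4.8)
The plan is to bound $m_{C_{n,r}}[0,1)$ from below by the Courant--Fischer (min--max) principle. Since $m_{C_{n,r}}[0,1)$ equals the number of negative eigenvalues of $L(C_{n,r})-I$, it suffices to exhibit a subspace $S$ of dimension $T:=\lceil d(C_{n,r})/3\rceil+\lceil r/6\rceil-1$ on which the quadratic form $x^{T}\big(L(C_{n,r})-I\big)x=\sum_{uv\in E(C_{n,r})}(x_u-x_v)^2-\|x\|^2$ is negative for every nonzero $x$. I would build $S$ from two families of test vectors aligned with the two natural ``one dimensional'' pieces of the graph: the diametral path $P$, which has $d(C_{n,r})+1$ vertices and runs from the far end of the pendant path halfway around the cycle, and the complementary arc of $C_r$ joining the two cycle endpoints of $P$. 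A preliminary check shows why a single piece is not enough: deleting the closing edge and applying Theorem~\ref{lem:interlacing} gives only $m_{C_{n,r}}[0,1)\ge\lceil n/3\rceil-1$, while deleting the attaching edge gives $m_{C_{n,r}}[0,1)\ge 2\lceil r/6\rceil-1+\lceil(n-r)/3\rceil-1$, and neither reaches $T$ in all regimes (for instance at $C_{12,9}$), so the two contributions must be produced simultaneously.

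For the path family I would use the period-$6$ vector of Lemma~\ref{lem:path-one}, whose pattern $1,0,-1,-1,0,1$ realises Rayleigh quotient exactly $1$ on a path, together with its lower-frequency truncations, which give Rayleigh quotient strictly below $1$; by the remark on $P_n$ this should produce $\lceil d(C_{n,r})/3\rceil$ independent vectors essentially supported on $P$. For the cycle family I would use the low-frequency cyclic modes of $C_r$, of which, by Remark~\ref{rem:cycle}, there are $2\lceil r/6\rceil-1$ below $1$; restricting to the complementary arc and discarding those already carried by $P$ should leave the extra $\lceil r/6\rceil-1$ vectors, the single lost mode being the shared global constant. The key design choice is to fix the phase of the period-$6$ pattern so that the degree-$3$ junction vertex $u$ and the antipodal cycle vertex both fall on a zero of the pattern; then the cross terms between the two families at these vertices vanish, the form block-diagonalises over $S$, and the dimension count gives exactly $T$.

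The main obstacle is precisely this junction analysis. At $u$ the pendant path and the two cycle arcs all meet, so naively extending a path eigenvector by zero adds a penalty $x_u^2$ to the numerator of the Rayleigh quotient (and an analogous penalty appears at the antipodal vertex), which can push a borderline mode to quotient $\ge 1$. Forcing the patterns to vanish at these vertices removes the penalty but constrains the admissible frequencies, and one must verify that enough low-frequency modes survive to reach $T$ in every residue class of $r\bmod 6$ and $d(C_{n,r})\bmod 3$. I would organise the bookkeeping by first using Lemma~\ref{lem:rem-pendant} and Corollary~\ref{coro:path} to peel pendant $P_{3k}$ pieces off the arm, which preserves the lower bound for $m_{C_{n,r}}[0,1)$ and leaves $m_{C_{n,r}}(1)$ unchanged, thereby reducing the verification to a bounded number of short base configurations; these base cases I would then settle by locating the roots of $\phi(C_{n,r})$ in $[0,1)$ directly from the factorisation in Lemma~\ref{lem:char-lollipop} together with the explicit spectra of $P_m$ and $C_r$, invoking the equality clause of Lemma~\ref{lem:eig_sum} to control the borderline eigenvalue sitting at $1$.
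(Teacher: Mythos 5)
Your diagnosis of the crux is correct: deleting a single edge and interlacing via Theorem~\ref{lem:interlacing} loses one eigenvalue and so falls short of the bound in certain residue classes (your $C_{12,9}$ example checks out), and the equality clause of Lemma~\ref{lem:eig_sum} is exactly the right tool for those borderline cases --- it is in fact the paper's key device. But both concrete mechanisms you propose for organizing the argument fail. First, the peeling reduction is unsound: Lemma~\ref{lem:rem-pendant} transfers a lower bound with the \emph{same} value $k$ from the peeled graph to the original, whereas the target $\lceil d/3\rceil+\lceil r/6\rceil-1$ drops by $k$ when you remove $P_{3k}$ from the arm. Proving the proposition on a short base configuration therefore certifies only $m_{C_{n,r}}[0,1)\geq \lceil d/3\rceil+\lceil r/6\rceil-1-k$ for the original graph; to recover the full bound, the base cases would need to beat their own bound by an arbitrary surplus $k$, which is impossible because the bound is essentially sharp. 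Concretely, $m_{C_{12,8}}[0,1)=4$, which equals $\lceil 8/3\rceil+\lceil 8/6\rceil-1$, so peeling $C_{15,8}$ (whose target is $5$) down to $C_{12,8}$ can never certify $5$. There is no reduction to finitely many short configurations; the argument must handle arbitrary arm length directly, which is what forces the paper's residue-by-residue analysis of the whole graph.

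Second, the test-subspace design is not merely unfinished but lossy as described. Forcing your period-$6$ vectors to vanish at the junction vertex and at the antipodal cycle vertex imposes Dirichlet-type constraints, and Dirichlet constraints cost negative directions. For $C_{12,6}$ (target $\lceil 9/3\rceil+\lceil 6/6\rceil-1=3$), vectors supported on the diametral path and vanishing at $v_6$ and $v_3$ block-diagonalize into a $B_6$ block (the arm, with one Dirichlet end) and an $H_2$ block, which have $2$ and $0$ eigenvalues below $1$ respectively, while the complementary arc contributes another $H_2$ block with $0$; so your subspace has dimension at most $2<3$. Meanwhile the ``insufficient'' crude step of deleting the single edge $v_1v_6$ yields $P_{12}$ with $\lceil 12/3\rceil=4$ eigenvalues below $1$, and interlacing already gives $3$: the vanishing trick is strictly weaker than the step you set it up to improve. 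The paper instead deletes one edge --- $v_1v_r$ (giving $P_n$) or $v_rv_{r+1}$ (giving $C_r\cup P_{n-r}$), chosen according to $r\bmod 6$ --- so that the subgraph count exceeds the target except in two borderline subcases ($r\equiv 2\pmod 6$ with $n-r\equiv 0\pmod 3$, and $r\equiv 3,4\pmod 6$ with $n\equiv 0\pmod 3$), and there it closes the gap with the argument you relegate to the base cases: if $\mu_k(C_{n,r})=1$, the equality clause of Lemma~\ref{lem:eig_sum} forces a common eigenvector $z$ with $Mz=0$ for the rank-one difference matrix $M$ of the two Laplacians, but the explicit eigenvector of Lemma~\ref{lem:path-one} visibly violates $Mz=0$. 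Applying that equality argument directly to the full graph, rather than to base cases of a reduction that cannot exist, is precisely what turns your outline into a proof.
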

\begin{proof}
    Let $k=\big\lceil \frac{d(C_{n,r})}{3} \big\rceil +\left\lceil \frac{r}{6} \right\rceil -1$. 
    We consider the following two subgraphs:
    $$P_{n}=C_{n,r}-v_1v_r ~~\text{and}~~ C_r \cup P_{n-r}=C_{n,r}-v_rv_{r+1},$$
    where $v_1,v_r,v_{r+1}$ are 
    labeled as in Figure~\ref{fig:lollipop}.
    
    {\bf Case 1.} Suppose that $r \equiv 1 \pmod{6}$. Then $\lceil \frac{r-1}{2} \rceil=\lfloor \frac{r}{2} \rfloor$. 
    We have
    \begin{align*}
        m_{C_{r}\cup P_{n-r}}[0,1)& =m_{C_{r}}[0,1)+m_{P_{n-r}}[0,1)\\
        &=2\bigg\lceil\frac{r}{6}\bigg\rceil -1 +\left\lceil \frac{n-r}{3} \right\rceil\\
        &=\left\lceil \frac{n-r}{3} \right\rceil + \left\lceil \frac{r-1}{6} \right\rceil+\bigg\lceil \frac{r}{6} \bigg\rceil\\
        &\geq \left\lceil \frac{n-r+\left\lceil \frac{r-1}{2} \right\rceil}{3} \right\rceil+\bigg\lceil \frac{r}{6} \bigg\rceil\\
        &=\left\lceil \frac{n-r+\lfloor \frac{r}{2} \rfloor}{3} \right\rceil+\bigg\lceil \frac{r}{6} \bigg\rceil\\
        &=\left\lceil \frac{d(C_{n,r})}{3} \right\rceil+\bigg\lceil \frac{r}{6} \bigg\rceil=k+1.
    \end{align*}
    By Lemma \ref{lem:interlacing}, we have $\mu_k(C_{n,r})\leq \mu_{k+1}(C_r \cup P_{n-r}) <1$. Thus $m_{C_{n,r}}[0,1) \geq k$.
    
    {\bf Case 2.} Suppose that $r\equiv 2\pmod{6}$. Then $\lceil \frac{r}{2} \rceil=\lfloor\frac{r}{2}\rfloor$.
    Note that 
    \begin{align*}
        m_{C_r \cup P_{n-r}}[0,1)=& \,2\bigg\lceil \frac{r}{6} \bigg\rceil-1 +\left\lceil \frac{n-r}{3} \right\rceil\\
        =&\left\lceil \frac{n-r}{3} \right\rceil+\bigg\lceil \frac{r}{6} \bigg\rceil+\bigg\lceil \frac{r}{6} \bigg\rceil-1 \\
        \geq &\left\lceil \frac{n-r+\lceil \frac{r}{2} \rceil}{3} \right\rceil+\bigg\lceil \frac{r}{6} \bigg\rceil-1\\
        =&\left\lceil \frac{n-r+\lfloor \frac{r}{2} \rfloor}{3} \right\rceil+\bigg\lceil \frac{r}{6} \bigg\rceil-1\\
        =&\left\lceil \frac{d(C_{n,r})}{3} \right\rceil+\bigg\lceil \frac{r}{6} \bigg\rceil-1=k.
    \end{align*}
    If $n-r \not\equiv 0 \pmod{3}$ then $m_{C_r \cup P_{n-r}}[0,1) >k$ since $\left\lceil \frac{n-r}{3} \right\rceil+\left\lceil \frac{r}{6} \right\rceil > \left\lceil \frac{n-r+\lfloor \frac{r}{2} \rfloor}{3} \right\rceil$. 
    Thus $\mu_k(C_{n,r})\leq \mu_{k+1}(C_r \cup P_{n-r}) <1$ and hence $m_{C_{n,r}}[0,1) \geq k$.
    Now, we consider the case $n-r \equiv 0 \pmod{3}$.
    Since $\left\lceil \frac{n-r}{3} \right\rceil+\left\lceil \frac{r}{6} \right\rceil = \left\lceil \frac{n-r+\lfloor \frac{r}{2} \rfloor}{3} \right\rceil$, it follows that $m_{C_r \cup P_{n-r}}[0,1) =k$ . 
    Suppose that $m_{C_{n,r}}[0,1)<k$. Then $\mu_k(C_{n,r}) \geq 1$. 
    Since $n-r \equiv 0 \pmod{3}$, $P_{n-r}$ has a Laplacian eigenvalue 1. Hence $\mu_{k+1}(C_r \cup P_{n-r})=1$.
    It follows that $\mu_k(C_{n,r})=\mu_{k+1}(C_r \cup P_{n-r})=1$ by Lemma \ref{lem:interlacing}.
    Note that $L(C_{n,r})-L(C_r \cup P_{n-r})=M=(m_{ij})$, where
    \begin{equation*}
        m_{ij}=\left\{\begin{array}{rl}
            1, & \mbox{if $(i,j) \in \{(r,r),~(r+1,r+1)\}$,}\\
            -1, & \mbox{if $(i,j) \in \{(r,r+1),~(r+1,r)\}$,}\\
            0, & \mbox{otherwise.}
        \end{array}\right.
    \end{equation*}
    Since $M$ is permutationally similar to $L(K_2 \cup (n-2)K_1)$, we have 
    \begin{equation*}
        \mu_{1}(M)=\cdots=\mu_{n-1}(M)=0~~\text{and}~~\mu_n(M)=2.
    \end{equation*}
    Then $\mu_k(C_{n,r})=\mu_{k+1}(C_r \cup P_{n-r})+\mu_{n-1}(M)$.
    By Lemma~\ref{lem:eig_sum}, there exists an eigenvector $z$ such that $$L(C_{n,r})z=\mu_k(C_{n,r})z=z,$$ $$L(C_r \cup P_{n-r})z=\mu_{k+1}(C_r \cup P_{n-r})z=z,~~\text{and}$$ $$Mz=\mu_{n-1}(M)z=0.$$
    Let $x$ be the eigenvector corresponding to the Laplacian eigenvalue 1 of $P_{n-r}$, which is given in Lemma~\ref{lem:path-one}.
    Since 1 is not a Laplacian eigenvalue of $C_r$, the eigenvector $z$ corresponding to the Laplacian eigenvalue 1 of $C_r \cup P_{n-r}$ is a multiple of
    \begin{equation*}
        \left[\begin{array}{c}
             0  \\\hline
             x 
        \end{array}\right].
    \end{equation*}
    Since the $r$th coordinate of $z$ is 0 and the $(r+1)$th coordinate of $z$ is nonzero, we have $Mz \neq 0$. This is a contradiction and hence $m_{C_{n,r}}[0,1) \geq k$.
    
    {\bf Case 3.} Suppose that $r \equiv 3,4 \pmod{6}$.
    Then $\lceil \frac{r}{2} \rceil \equiv 2 \pmod{3}$.
    If $n \not\equiv 0 \pmod{3}$, then 
    \begin{align*}
        m_{P_n}[0,1)=&\,\bigg\lceil \frac{n}{3} \bigg\rceil =\left\lceil \frac{n+1}{3} \right\rceil\\
        =& \left\lceil \frac{d(C_{n,r})+\lceil \frac{r}{2} \rceil+1}{3} \right\rceil\\
        =& \left\lceil \frac{d(C_{n,r})}{3} \right\rceil+ \left\lceil \frac{\lceil \frac{r}{2} \rceil+1}{3} \right\rceil\\
        =&\left\lceil \frac{d(C_{n,r})}{3} \right\rceil+ \left\lceil \frac{r+2}{6} \right\rceil\\
        =&\left\lceil \frac{d(C_{n,r})}{3} \right\rceil+ \bigg\lceil \frac{r}{6} \bigg\rceil =k+1.
    \end{align*}
    Thus $\mu_k(C_{n,r})\leq \mu_{k+1}(P_{n}) <1$ and hence $m_{C_{n,r}}[0,1) \geq k$.
    
    Now, we assume that $n=d(C_{n,r})+\lceil \frac{r}{2} \rceil \equiv 0 \pmod{3}$. Then $d(C_{n,r}) \equiv 1 \pmod{3}$.
    Suppose that $m_{C_{n,r}}[0,1)<k$. Then $\mu_k(C_{n,r}) \geq 1$.
    Since
    \begin{equation*}
        m_{P_n}[0,1) =\bigg\lceil \frac{n}{3} \bigg\rceil=\left\lceil \frac{d(C_{n,r})+\lceil \frac{r}{2} \rceil}{3} \right\rceil=\left\lceil \frac{d(C_{n,r})}{3} \right\rceil+\bigg\lceil \frac{r}{6} \bigg\rceil-1=k,
    \end{equation*}
    we have $\mu_{k+1}(P_n)=1$ and hence $\mu_k(C_{n,r})=1$.
    Note that $L(C_{n,r})-L(P_n)=N=(n_{ij})$, where
    \begin{equation*}
        n_{ij}=\left\{\begin{array}{rl}
            1, & \mbox{if $(i,j) \in \{(1,1),~(r,r)\}$,}\\
            -1, & \mbox{if $(i,j) \in \{(1,r),~(r,1)\}$,}\\
            0, & \mbox{otherwise.}
        \end{array}\right.
    \end{equation*}
     Since $N$ is permutationally similar to $L(K_2 \cup (n-2)K_1)$, we have 
    \begin{equation*}
        \mu_{1}(N)=\cdots=\mu_{n-1}(N)=0~~\text{and}~~\mu_n(N)=2.
    \end{equation*}
    Then $\mu_k(C_{n,r})=\mu_{k+1}(P_n)+\mu_{n-1}(N)$.
    By Lemma~\ref{lem:eig_sum}, there exists an eigenvector $x$ such that $$L(C_{n,r})x=\mu_k(C_{n,r})x=x,$$ $$L(P_n)x=\mu_{k+1}(P_n)x=x,~~\text{and}$$ $$Nx=\mu_{n-1}(N)x=0.$$
    Let $x$ be the eigenvector corresponding to the Laplacian eigenvalue 1 of $P_n$, which is given in Lemma~\ref{lem:path-one}.
    Then the first coordinate of $x$ is 1.
    Since $r \equiv 3,4 \pmod{6}$, the $r$th coordinate of $x$ is $-1$.
    Hence $Nx \neq 0$. This is a contradiction and hence $m_{C_{n,r}}[0,1) \geq k$.

    {\bf Case 4.} Suppose that $r \equiv 0, 5 \pmod{6}$. Then $\lceil \frac{r}{2} \rceil\equiv 0 \pmod{3}$. Since
    \begin{equation*}
        m_{P_n}[0,1)=\bigg\lceil \frac{n}{3} \bigg\rceil=\left\lceil \frac{d(C_{n,r})+\lceil \frac{r}{2} \rceil}{3} \right\rceil=\left\lceil \frac{d(C_{n,r})}{3} \right\rceil+\bigg\lceil \frac{r}{6} \bigg\rceil=k+1,
    \end{equation*}
     we obtain $\mu_k(C_{n,r})\leq \mu_{k+1}(P_{n}) <1$. 
     Thus $m_{C_{n,r}}[0,1) \geq k$.
\end{proof}

\begin{figure}
    \centering
    \includegraphics[width=16cm]{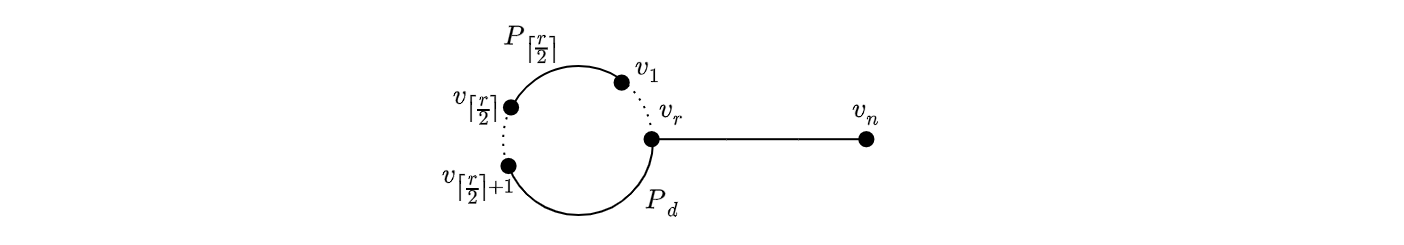}
    \caption{The subgraph $P_d\cup P_{\lceil\frac{r}{2}\rceil}$ of $C_{n,r}$}
    \label{fig:subgraph}
\end{figure}
The following proposition shows that for the special case $d(C_{n,r}) \equiv 0 \pmod{3}$ and $r \not\equiv 0 \pmod{6}$, we can find the exact number of Laplacian eigenvalues in the interval $[0,1)$.
\begin{proposition}\label{prop:lollipop-3}
    Let $C_{n,r}$ be the lollipop graph with $n$ vertices and girth $r$.
    If $d(C_{n,r})\equiv 0\pmod{3}$ and $r \not\equiv 0 \pmod{6}$ then 
    \begin{equation*}
        m_{C_{n,r}}[0,1) = \frac{d(C_{n,r})}{3} +\bigg\lceil \frac{r}{6} \bigg\rceil.
    \end{equation*}
\end{proposition}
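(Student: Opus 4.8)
The plan is to trap $m_{C_{n,r}}[0,1)$ between two consecutive integers and then use a parity argument extracted from the characteristic polynomial to decide which one occurs. Throughout write $d=d(C_{n,r})=n-\lceil r/2\rceil$, and since $3\mid d$ set $K=\tfrac{d}{3}+\lceil r/6\rceil$; the goal is $m_{C_{n,r}}[0,1)=K$.

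For the upper bound I would use the subgraph $G'=P_d\cup P_{\lceil r/2\rceil}$ of Figure~\ref{fig:subgraph}, obtained from $C_{n,r}$ by deleting two cycle edges. Applying Theorem~\ref{lem:interlacing} twice gives $\mu_j(G')\le\mu_j(C_{n,r})$ for every $j$, hence $m_{C_{n,r}}[0,1)\le m_{G'}[0,1)$. Since $m_{G'}[0,1)=\lceil d/3\rceil+\lceil\lceil r/2\rceil/3\rceil=\tfrac{d}{3}+\lceil r/6\rceil=K$ (using $\lceil\lceil r/2\rceil/3\rceil=\lceil r/6\rceil$ and $3\mid d$), this yields $m_{C_{n,r}}[0,1)\le K$. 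For the matching weak lower bound I would simply quote Proposition~\ref{prop:lollipop}, which gives $m_{C_{n,r}}[0,1)\ge\lceil d/3\rceil+\lceil r/6\rceil-1=K-1$. Thus $m_{C_{n,r}}[0,1)\in\{K-1,K\}$, and it remains only to rule out the value $K-1$.

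To decide between the two values I would evaluate the characteristic polynomial at $x=1$. In Lemma~\ref{lem:char-lollipop} the first summand carries the factor $\phi(C_r)-\tfrac1x\phi(C_r)=(1-\tfrac1x)\phi(C_r)$, which vanishes at $x=1$, so
\begin{equation*}
\phi(C_{n,r};1)=-\phi(C_r;1)\,\phi(P_{n-r-1};1)-\phi(P_{n-r};1)\,\phi(P_r;1).
\end{equation*}
A short computation (from the tridiagonal recurrence, or from the product $\prod_k\big(2\cos\tfrac{k\pi}{m}-1\big)$ coming from the known path eigenvalues) shows that $\phi(P_m;1)$ equals $1,-1,0$ according as $m\equiv1,2,0\pmod3$, while Lemma~\ref{lem:char-PC}(c) gives $\phi(C_r;1)=\phi(P_{r+1};1)-\phi(P_{r-1};1)+2(-1)^{r+1}$, which is nonzero exactly when $r\not\equiv0\pmod6$. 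Because $3\mid d$ slaves $n\bmod3$ to $r\bmod6$ (namely $n\equiv\lceil r/2\rceil\pmod3$), I would run through the five residues $r\equiv1,2,3,4,5\pmod6$ and verify in each that $\phi(C_{n,r};1)\neq0$; hence $1$ is not a Laplacian eigenvalue of $C_{n,r}$.

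Since $1$ is not an eigenvalue, $\phi(C_{n,r};1)=\prod_i\big(1-\mu_i(C_{n,r})\big)$ has sign $(-1)^{\#\{i:\mu_i>1\}}=(-1)^{\,n-m_{C_{n,r}}[0,1)}$. Comparing, in each residue class, the sign computed above with $(-1)^{\,n-K}$ shows $m_{C_{n,r}}[0,1)\equiv K\pmod2$, and together with $m_{C_{n,r}}[0,1)\in\{K-1,K\}$ this forces $m_{C_{n,r}}[0,1)=K$. The delicate part, and the main obstacle, is precisely the bookkeeping of the last two paragraphs: organizing the cases $r\bmod6$ (with $n\bmod3$ determined by them) so that both the nonvanishing $\phi(C_{n,r};1)\neq0$ and the parity match $(-1)^{\,n-m}=(-1)^{\,n-K}$ hold simultaneously in every case. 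Everything else follows immediately from the quoted results.
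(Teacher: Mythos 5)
Your proposal is correct, and its skeleton coincides with the paper's: the paper also obtains the upper bound $m_{C_{n,r}}[0,1)\le K$ by interlacing against path subgraphs (it uses $P_n$ together with $P_d\cup P_{\lceil r/2\rceil}$), also evaluates $\phi(C_{n,r};1)$ via Lemma~\ref{lem:char-lollipop} and Lemma~\ref{lem:char-PC}, and also runs the five-residue case analysis, obtaining $\phi(C_{n,r};1)=1,2,-4,-1,1$ for $r\equiv1,2,3,4,5\pmod6$. The genuine difference is the logical wrapper around that computation. The paper never invokes Proposition~\ref{prop:lollipop} here; instead, after a preliminary split on whether $\mu_k(P_n)=\mu_k(C_{n,r})$, it chooses an auxiliary point $\alpha\in(\mu_{k-1}(C_{n,r}),\mu_k(C_{n,r}))$, reads the sign of $\phi(C_{n,r};\alpha)$ off the parity of the number of roots exceeding $\alpha$ (``the shape of the graph''), and concludes $\mu_k(C_{n,r})<1$ by a sign-change (intermediate value) argument between $\alpha$ and $1$. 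Your route --- sandwich $m\in\{K-1,K\}$ using Proposition~\ref{prop:lollipop} and the two-edge deletion, then decide by the parity identity $\operatorname{sign}\phi(C_{n,r};1)=(-1)^{n-m}$, legitimate once $\phi(C_{n,r};1)\neq0$ --- dispenses with the auxiliary point and with that sub-case split, which is a mild but real streamlining. Reassuringly, the condition you defer as ``the delicate bookkeeping'' is literally the condition the paper verifies: its $\phi(C_{n,r};\alpha)$ has sign $(-1)^{n-K+1}$, so its ``opposite signs'' requirement is exactly your requirement $\operatorname{sign}\phi(C_{n,r};1)=(-1)^{n-K}$, and it does hold in all five cases ($n-K$ even and $\phi(C_{n,r};1)>0$ for $r\equiv1,2,5$; $n-K$ odd and $\phi(C_{n,r};1)<0$ for $r\equiv3,4$). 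So your plan closes, by the same computation the paper carries out.
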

\begin{proof}
    For simplicity of notation, we write $d$ instead of $d(C_{n,r})$.
    Let $k=\frac{d}{3} +\lceil \frac{r}{6} \rceil$.
    Then 
    $$k=\frac{d}{3} +\bigg\lceil \frac{r}{6} \bigg\rceil =\left\lceil\frac{d+\lceil \frac{r}{2} \rceil}{3}\right\rceil=\bigg\lceil \frac{n}{3} \bigg\rceil.$$
    By deleting edges $v_1v_r$ and $v_{\lceil \frac{r}{2} \rceil}v_{\lceil \frac{r}{2} \rceil+1}$ from $G$,
    we obtain a subgraph $P_d \cup P_{\lceil \frac{r}{2} \rceil}$ (see Figure~\ref{fig:subgraph}).
    Note that $m_{P_d \cup P_{\lceil \frac{r}{2} \rceil}}[0,1)=k$.
    Since $d \equiv 0 \pmod{3}$, we obtain $\mu_{k+1}(P_d \cup P_{\lceil \frac{r}{2} \rceil})=1$.
    By Lemma~\ref{lem:interlacing}, we have
    \begin{equation*}
        \mu_{k-1}(C_{n,r}) \leq \mu_k(P_n) \leq \mu_{k}(C_{n,r}) \leq \mu_{k+1}(P_n)\leq \mu_{k+1}(C_{n,r})
    \end{equation*}
    and
    \begin{equation*}
        \mu_k(P_n) \leq \mu_{k+1}(P_d \cup P_{\lceil \frac{r}{2} \rceil}) \leq \mu_{k+1}(P_n) \leq \mu_{k+2}(P_d \cup P_{\lceil \frac{r}{2} \rceil}).
    \end{equation*}  
    In order to prove $m_{C_{n,r}}[0,1) =k$, it suffices to show that 
    \begin{equation*}
        \mu_{k}(C_{n,r}) < \mu_{k+1}(P_d \cup P_{\lceil \frac{r}{2} \rceil}) =1. 
    \end{equation*}
    If $\mu_k(P_n)=\mu_k(C_{n,r})$, then $\mu_k(C_{n,r})=\mu_k(P_n)<1$ since $m_{P_n}[0,1)=k$.
    Suppose that $\mu_k(P_n)\neq \mu_k(C_{n,r})$.
    Let $\alpha$ be a real number such that 
    \begin{equation*}
        \mu_{k-1}(C_{n,r}) < \alpha < \mu_k(C_{n,r}).
    \end{equation*}
    If the signs of the values $\phi(C_{n,r};\alpha)$ and $\phi(C_{n,r};1)$ are opposite, then $$\alpha <\mu_k(C_{n,r}) < \mu_{k+1}(P_d \cup P_{\lceil \frac{r}{2}\rceil})=1.$$
    Thus we only need to show that the signs of the values $\phi(C_{n,r};\alpha)$ and $\phi(C_{n,r};1)$ are opposite.
    First, we determine the sign of the value $\phi(C_{n,r};\alpha)$. 
    Suppose that $r\equiv 1,2 \pmod{6}$. Then $n=d+\lceil \frac{r}{2} \rceil \equiv 1 \pmod{3}$.
    Let $n=3t+1$ for some positive integer $t$.
    Then $k=\lceil \frac{n}{3} \rceil=t+1$.
    Hence $n$ and $k$ are either both even or both odd.
    By considering the shape of the graph of $y=\phi(C_{n,r};x)$, we deduce that the value of $\phi(C_{n,r};\alpha)$ is negative for 
    $\mu_{k-1}(C_{n,r})<\alpha < \mu_k(C_{n,r})$.
    Similarly, if $r\equiv 5 \pmod{6}$, then the value of $\phi(C_{n,r};\alpha)$ is negative.
    Suppose that $r\equiv 3,4 \pmod{6}$. 
    Then $n=d+\lceil \frac{r}{2} \rceil \equiv 2 \pmod{3}$.
    Let $n=3t+2$ for some positive integer $t$.
    Then $k=\lceil \frac{n}{3} \rceil=t+1$.
    Thus, $n$ and $k$ have different parity.
    By considering the shape of the graph of $y=\phi(C_{n,r};x)$, we deduce that the value of $\phi(C_{n,r};\alpha)$ is positive for 
    $\mu_{k-1}(C_{n,r})<\alpha < \mu_k(C_{n,r})$.
    
    Now, we determine the sign of the value $\phi(C_r;1)$.
    By Lemma  \ref{lem:char-PC} (c) and Lemma \ref{lem:char-lollipop},
    \begin{equation*}
        \phi(C_{n,r};1)=-\big(\phi(P_{r+1};1)-\phi(P_{r-1};1)+2(-1)^{r+1}\big)\phi(P_{n-r-1};1)-\phi(P_{n-r};1)\phi(P_r;1).
    \end{equation*}
    Since  $\phi(P_0)=0$, $\phi(P_1;1)=1$ and
    $\phi(P_{n+1})=(x-2)\phi(P_n)-\phi(P_{n-1})$,
    we have
    \begin{equation*}
        \phi(P_n;1)=\left\{\begin{array}{rl}
            0, & \mbox{if } n\equiv 0 \pmod{3},\\
            1, & \mbox{if } n\equiv 1 \pmod{3},\\
            -1, & \mbox{if } n\equiv 2 \pmod{3}.
        \end{array}\right.
    \end{equation*}
    Since $n-r=d-\lfloor \frac{r}{2} \rfloor\equiv -\lfloor \frac{r}{2} \rfloor \pmod{3}$, we have
    \begin{multline*}
        \phi(C_{n,r};1)=-\big( \phi(P_{r+1};1)-\phi(P_{r-1};1)+2(-1)^{r+1} \big)\phi(P_{-\lfloor \frac{r}{2} \rfloor-1};1)-\phi(P_{-\lfloor \frac{r}{2} \rfloor};1)\phi(P_r;1).
    \end{multline*}
    Then
    \begin{equation*}
        \phi(C_{n,r};1)=\left\{\begin{array}{rl}
            1, & \mbox{if } r\equiv 1 \pmod{6},\\
            2, & \mbox{if } r\equiv 2 \pmod{6},\\
            -4, & \mbox{if } r\equiv 3 \pmod{6},\\
            -1, & \mbox{if } r\equiv 4 \pmod{6},\\
            1, & \mbox{if } r\equiv 5 \pmod{6}.
         \end{array}\right.
    \end{equation*}
    Thus the signs of the values $\phi(C_{n,r};1)$ are positive when $r\equiv 1,2,5 \pmod{6}$ and negative when $r \equiv 3,4 \pmod{6}$. 
    Hence $\phi(C_{n,r};\alpha)$ and $\phi(C_{n,r};1)$ have opposite signs. 
\end{proof}

\begin{remark}
    In Proposition~\ref{prop:lollipop-3}, the lower bound is improved by 1.
    We can rephrase Propositions~\ref{prop:lollipop} and \ref{prop:lollipop-3} as follows:\\
    If $r \not\equiv 0 \pmod{6}$, then
    \begin{equation*}
        m_{C_{n,r}}[0,1) \geq \bigg\lceil\frac{d(C_{n,r})+1}{3}\bigg\rceil+\bigg\lceil \frac{r}{6} \bigg\rceil-1.
    \end{equation*}
    If $r \equiv 0 \pmod{6}$, then
    \begin{equation*}
        m_{C_{n,r}}[0,1) \geq \bigg\lceil\frac{d(C_{n,r})}{3}\bigg\rceil+\bigg\lceil \frac{r}{6} \bigg\rceil-1.
    \end{equation*}
\end{remark}

\begin{lemma}\label{lem:cylce-one}
    Let $C_n$ be the cycle graph with $n$ vertices and let  $x=\begin{bmatrix} x_1 & \cdots & x_n \end{bmatrix}^T$ and $y=\begin{bmatrix} y_1 & \cdots & y_n \end{bmatrix}^T$  be vectors of order $n$, where    
    \begin{equation*}
        x_i =\left\{\begin{array}{rl}
                1, & \mbox{if $i\equiv 1,~ 6 \pmod{6}$,}\\
                0, & \mbox{if $i\equiv 2,~ 5 \pmod{6}$,}\\
                -1, & \mbox{if $i\equiv 3,~ 4 \pmod{6}$}
             \end{array}\right.
             \quad \text{and} \quad
         y_i =\left\{\begin{array}{rl}
                1, & \mbox{if $i\equiv 1,~ 2 \pmod{6}$,}\\
                0, & \mbox{if $i\equiv 3,~ 6 \pmod{6}$,}\\
                -1, & \mbox{if $i\equiv 4,~ 5 \pmod{6}$.}
             \end{array}\right.
    \end{equation*}
    If $n$ is divisible by 6, then the
    vectors $x$ and $y$ are eigenvectors corresponding to the Laplacian eigenvalue $1$ of $C_n$.
\end{lemma}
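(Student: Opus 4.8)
The plan is to mimic the direct verification used for Lemma~\ref{lem:path-one}, checking that $L(C_n)x = x$ and $L(C_n)y = y$ entrywise. Since every vertex of $C_n$ has degree $2$, we have $L(C_n) = 2I - A(C_n)$, where $A(C_n)$ is the adjacency matrix whose only nonzero entries connect cyclically consecutive vertices. Writing indices modulo $n$ (so that vertex $1$ and vertex $n$ are adjacent), the $i$th entry of $L(C_n)v$ equals $2v_i - v_{i-1} - v_{i+1}$. Hence the eigenvalue equation $L(C_n)v = v$ is equivalent to the local identity
\begin{equation*}
    v_i = v_{i-1} + v_{i+1} \qquad (1 \le i \le n),
\end{equation*}
with the convention $v_0 = v_n$ and $v_{n+1} = v_1$. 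So the whole problem reduces to verifying this three-term relation at every vertex for both $x$ and $y$.

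First I would observe that, because $6 \mid n$, the residue of each index modulo $n$ agrees with its residue modulo $6$; in particular the neighbours of a vertex in residue class $c$ lie in classes $c-1$ and $c+1$ modulo $6$. Consequently the value of $v_i$ and of its two neighbours depends only on $i \bmod 6$, and it suffices to check the relation $v_i = v_{i-1} + v_{i+1}$ once for each of the six residue classes. For $x$ the periodic pattern of values is $(1,0,-1,-1,0,1)$ across classes $1,2,3,4,5,0$, and a case-by-case check confirms the relation in every class (for instance, in class $1$ one gets $x_{i-1} + x_{i+1} = 1 + 0 = 1 = x_i$). The vector $y$ has pattern $(1,1,0,-1,-1,0)$, and the same six checks go through verbatim.

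The only point requiring care is the cyclic boundary: unlike the path $P_n$, the cycle forces the relation to hold across the seam joining vertex $n$ to vertex $1$. This is exactly where the hypothesis $6 \mid n$ enters. Since $n \equiv 0$ and $1 \equiv 1 \pmod{6}$, the wrap-around cases $i = 1$ and $i = n$ coincide with the residue-class checks already performed (classes $1$ and $0$), so no inconsistency arises at the seam. I expect this boundary compatibility to be the only genuinely non-automatic step; once it is noted, the computation is as immediate as in Lemma~\ref{lem:path-one}.
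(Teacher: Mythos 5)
Your proof is correct and is essentially the paper's own argument: the paper simply states that the lemma ``can be verified by direct computation,'' and your reduction of $L(C_n)v=v$ to the three-term relation $v_i=v_{i-1}+v_{i+1}$ checked over the six residue classes, with the seam handled by $6\mid n$, is precisely that computation written out. (One phrasing nit: what you need is not that residues mod $n$ ``agree with'' residues mod $6$, but that reduction mod $6$ is well defined on indices taken mod $n$ when $6\mid n$ --- which is exactly how you use it, so the argument stands.)
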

\begin{proof}
  It  can be verified by direct computation.  
\end{proof}

The following proposition provides the conditions under which a lollipop graph has 1 as a Laplacian eigenvalue.
\begin{proposition}\label{prop:r=01-lollipop}
    Let $C_{n,r}$ be a lollipop graph with $n$ vertices and girth $r$.
    Let $x$ be an eigenvector corresponding to the Laplacian eigenvalue 1 of $P_n$ in Lemma~\ref{lem:path-one}
    and let $y$ be an eigenvector corresponding to the Laplacian eigenvalue 1 of $C_r$ in Lemma~\ref{lem:cylce-one}.
    Then 
    \begin{itemize}
        \item[(a)] If $r \equiv 0 \pmod{6}$, then 1 is a Laplacian eigenvalue of $C_{n,r}$ with the corresponding eigenvector $z$, where the vector $z$ has coordinates
    \begin{equation*}
        z_i=\left\{ 
            \begin{array}{rl}
                y_i, & \mbox{if $i=1,\ldots,r$}, \\
                0, & \mbox{if $i=r+1,\ldots,n$.}
            \end{array}
        \right.
    \end{equation*}

        Moreover, 
        \begin{equation*}
            m_{C_{n,r}}(1)=\left\{ 
            \begin{array}{rl}
                2, & \mbox{if  $n \equiv 0 \pmod{3}$,} \\
                1, & \mbox{if  $n \not\equiv 0 \pmod{3}$.}
            \end{array}\right.
        \end{equation*}
        \item[(b)] If $r \equiv 1 \pmod{6}$ and $n \equiv 0 \pmod{3}$, then  1 is a Laplacian eigenvalue of $C_{n,r}$ with the corresponding eigenvector $x$ and $m_{C_{n,r}}(1)=1$.
        \item[(c)] If $r\equiv 3 \pmod{6}$ and $n \equiv 1 \pmod{3}$, then 1 is a Laplacian eigenvalue of $C_{n,r}$ with the corresponding eigenvector $w$, where the vector $w$ has coordinates
        \begin{equation*}
        w_i=\left\{ 
            \begin{array}{cl}
                ~\,y_i, & \mbox{if $i=1,\ldots,r$}, \\
                -2y_{i-r}, & \mbox{if $i=r+1,\ldots,n$.}
            \end{array}
        \right.
    \end{equation*}
        Moreover, $m_{C_{n,r}}(1)=1$. 
   \end{itemize}
\end{proposition}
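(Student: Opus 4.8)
The plan is to determine the entire eigenspace of $L(C_{n,r})$ for the eigenvalue $1$ by solving $L(C_{n,r})v=v$ coordinatewise; this single computation will both produce the claimed eigenvectors and pin down the multiplicity. Writing $v=(v_1,\ldots,v_n)^T$ with the labeling of Figure~\ref{fig:lollipop}, the equation at each degree-$2$ vertex (every cycle vertex except the junction $v_r$, and every interior path vertex) is the linear recurrence $v_{i+1}=v_i-v_{i-1}$, whose solutions are periodic of period $6$; the equation at the degree-$3$ junction $v_r$ (adjacent to $v_{r-1},v_1,v_{r+1}$) reads $v_{r+1}=2v_r-v_{r-1}-v_1$; the cycle-closing equation at $v_1$ reads $v_2+v_r=v_1$; and the pendant equation at $v_n$ reads $v_{n-1}=0$. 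Starting from the two free values $v_1,v_2$, I would note that the recurrences at $v_2,\ldots,v_{r-1}$ determine $v_3,\ldots,v_r$, the junction equation then determines $v_{r+1}$, and the path recurrences determine $v_{r+2},\ldots,v_n$. Hence every solution is governed by the pair $(v_1,v_2)$, and exactly two of the $n$ equations survive as genuine linear constraints: the closing equation at $v_1$ and the pendant equation at $v_n$.

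For the existence statements I would exhibit the appropriate choice of $(v_1,v_2)$ and identify the resulting period-$6$ sequence with the vectors $x$ and $y$ of Lemma~\ref{lem:path-one} and Lemma~\ref{lem:cylce-one}: the choice $(v_1,v_2)=(1,1)$ reproduces the $y$-pattern on the cycle and $(v_1,v_2)=(1,0)$ reproduces the $x$-pattern. For part (a) the junction equation, combined with the fact that $v$ is a $C_r$-eigenvector on the cycle, forces $v_{r+1}=v_r$; taking $v$ to vanish on the path then requires $v_r=0$, which holds precisely because $y_r=0$ when $r\equiv 0\pmod 6$, so the all-cycle vector $z$ is an eigenvector. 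For part (b) the closing equation with $r\equiv 1\pmod 6$ forces $v_2=0$, recovering the $x$-pattern, and the global vector $x$ then satisfies the junction and pendant equations because $n\equiv 0\pmod 3$. For part (c) the closing equation with $r\equiv 3\pmod 6$ forces $v_1=v_2$, the junction equation yields $v_{r+1}=-2v_1$, and propagating along the path reproduces exactly $w_{r+j}=-2y_j$. In each case Lemma~\ref{lem:path-one} and Lemma~\ref{lem:cylce-one} absorb the bulk of the verification, so only the junction and the two closing equations need checking by hand.

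For the multiplicity I would compute the rank of the two surviving constraints as linear functionals of $(v_1,v_2)$, using the period-$6$ expressions for $v_r$ and for $v_{n-1}$ together with the residues of $r$ and $n$. The closing equation $v_2+v_r=v_1$ becomes the trivial identity $0=0$ when $r\equiv 0\pmod 6$, the condition $v_2=0$ when $r\equiv 1\pmod 6$, and the condition $v_1=v_2$ when $r\equiv 3\pmod 6$. The pendant equation $v_{n-1}=0$ turns out to be automatically satisfied under the stated congruence on $n$ in each part, since that congruence places $v_{n-1}$ at a zero of the relevant period-$6$ path pattern (the precise zero-residue differing from part to part). Consequently the eigenspace has dimension $2$ minus the number of active constraints: in part (a) with $n\equiv 0\pmod 3$ both constraints are inactive and the dimension is $2$, whereas in all remaining situations exactly one constraint is active and the dimension is $1$, which matches the asserted values of $m_{C_{n,r}}(1)$.

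I expect the main obstacle to be the careful bookkeeping of the residues modulo $6$ of the positions $r$, $r-1$, and $n-1$, and, above all, arguing cleanly that the closing and pendant equations are the \emph{only} two constraints, i.e. that every other equation is definitional under the chosen propagation order. A secondary point I would verify is the degenerate short-path case $n=r+1$, in which $v_{n-1}$ coincides with the junction value $v_r$ and the pendant equation reads $v_r=0$; I would confirm that the stated congruences still yield the asserted multiplicities there, so that no boundary case escapes the uniform argument.
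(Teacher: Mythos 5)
Your proposal is correct, but it takes a genuinely different route from the paper. The paper proves existence by a perturbation argument: it writes $L(C_{n,r})=L(G')+E$, where $G'$ is the spanning subgraph obtained by deleting the edge $v_rv_{r+1}$ (part (a)) or $v_1v_r$ (part (b)) and $E$ is the Laplacian contribution of the deleted edge; the claimed vector is an eigenvector of $L(G')$ by Lemmas~\ref{lem:path-one} and \ref{lem:cylce-one} and is annihilated by $E$ because the two coordinates meeting the deleted edge are equal (part (c) is instead verified row by row). The multiplicity is then bounded from above separately: for $n\equiv 0\pmod{3}$ in (a), Corollary~\ref{coro:path} gives $m_{C_{n,r}}(1)=m_{C_r}(1)=2$, and in all other cases edge interlacing (Theorem~\ref{lem:interlacing}) against a spanning subgraph with no eigenvalue $1$ ($P_n$, or $C_r\cup P_{n-r}$ in (b)) forbids multiplicity greater than $1$. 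You instead solve $(L(C_{n,r})-I)v=0$ outright: the equations at the $n-2$ vertices other than $v_1$ and $v_n$ propagate $(v_1,v_2)$ via the period-$6$ recurrence $v_{i+1}=v_i-v_{i-1}$ and the junction relation, so the eigenspace is isomorphic to the subspace of pairs $(v_1,v_2)$ satisfying the two surviving constraints $v_1=v_2+v_r$ and $v_{n-1}=0$, and its dimension is $2$ minus the number of active constraints. Your residue computations are all correct (closing constraint trivial when $r\equiv 0$, equal to $v_2=0$ when $r\equiv 1$, equal to $v_1=v_2$ when $r\equiv 3\pmod{6}$; pendant constraint automatic under the stated congruence on $n$), as is your treatment of the boundary case $n=r+1$. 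What each approach buys: yours is more elementary and self-contained, needing neither interlacing nor Corollary~\ref{coro:path}, and it determines the entire eigenspace, hence exact multiplicities in every residue class rather than only the listed ones; the paper's method involves almost no bookkeeping and reuses the same perturbation-plus-interlacing template that drives Propositions~\ref{prop:lollipop}, \ref{prop:comp-r6} and \ref{prop:compass_2}. The one point you should make explicit in a final write-up is that the propagation map $(v_1,v_2)\mapsto v$ is linear and injective and that every solution arises this way, so the dimension count is legitimate; with that sentence added, your argument is complete.
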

\begin{proof}
    (a) Suppose that $r \equiv 0 \pmod{6}$.
    Note that $L(C_{n,r})-L(C_r \cup P_{n-r})=M=(m_{ij})$, where
    \begin{equation*}
        m_{ij}=\left\{\begin{array}{rl}
            1, & \mbox{if $(i,j) \in \{(r,r),~(r+1,r+1)\}$,}\\
            -1, & \mbox{if $(i,j) \in \{(r,r+1),~(r+1,r)\}$,}\\
            0, & \mbox{otherwise.}
        \end{array}\right.
    \end{equation*}
    Since $r \equiv 0 \pmod{6}$, the coordinate $z_r$ of the vector $z$ is 0 and $L(C_r)y=y$.
    Since $z_r=z_{r+1}=0$, we obtain $Mz=0$.
    Then $$L(C_{n,r})z=L(C_r \cup P_{n-r})z+Mz=z.$$ 
    Thus 1 is a Laplacian eigenvalue with the corresponding eigenvector $z$.
    
    Suppose that $n \equiv 0 \pmod{3}$ and $r \equiv 0 \pmod{6}$. Then $n-r \equiv 0 \pmod{3}$. Thus, by Corollary~\ref{coro:path}, $m_{C_{n,r}}(1)=m_{C_r}(1)=2$.
    Now, we claim that $m_{C_{n,r}}(1)=1$ if $r \equiv 0 \pmod{6}$ and $n \not\equiv 0 \pmod{3}$. Suppose that $m=m_{C_{n,r}}(1)>1$.
    Then $$\mu_k(C_{n,r})=\cdots=\mu_{k+m-1}(C_{n,r})=1$$ for some $k$.
    Since $P_n$ is a subgraph of $C_{n,r}$,
    by Lemma~\ref{lem:interlacing}, 
    \begin{equation*}
        1=\mu_{k}(C_{n,r}) \leq \mu_{k+1}(P_n) \leq \mu_{k+1}(C_{n,r})=1.
    \end{equation*}
    Thus $\mu_{k+1}(P_n)=1$, which contradicts to $m_{P_n}(1)=0$.
    Hence $m_{C_{n,r}}(1)=1$.
    
    (b) Suppose that $r \equiv 1 \pmod{6}$ and $n\equiv 0 \pmod{3}$.
    Note that $L(C_{n,r})-L(P_n)=N=(n_{ij})$, where
    \begin{equation*}
        n_{ij}=\left\{\begin{array}{rl}
            1, & \mbox{if $(i,j) \in \{(1,1),~(r,r)\}$,}\\
            -1, & \mbox{if $(i,j) \in \{(1,r),~(r,1)\}$,}\\
            0, & \mbox{otherwise.}
        \end{array}\right.
    \end{equation*}
    Since $r \equiv 1 \pmod{6}$, we have $x_r=1$.
    Since $x_1=x_r=1$, we obtain $Nx=0$.
    Since $n\equiv 0 \pmod{3}$, we have $L(P_n)x=x$.
    Then $$L(C_{n,r})x=L(P_n)x+Nx=x.$$
    Thus 1 is a Laplacian eigenvalue with the corresponding eigenvector $x$.
    
    Now, we show that $m_{C_{n,r}}(1)=1$.
    We consider the subgraph $C_{r} \cup P_{n-r}$ of $C_{n,r}$. Since $r \equiv 1 \pmod{6}$ and $n\equiv 0 \pmod{3}$, we have $n-r \not\equiv 0 \pmod{3}$. 
    Thus neither $C_r$ nor $P_{n-r}$ have the Laplacian eigenvalue 1. 
    Suppose that $m=m_{C_{n,r}}(1)>1$.
    Assume that $$\mu_k(C_{n,r})=\cdots=\mu_{k+m-1}(C_{n,r})=1$$ for some $k$.
    Since $C_{r} \cup P_{n-r}$ is a subgraph of $C_{n,r}$,
    by Lemma~\ref{lem:interlacing}, 
    \begin{equation*}
        1=\mu_{k}(C_{n,r}) \leq \mu_{k+1}(C_{r} \cup P_{n-r}) \leq \mu_{k+1}(C_{n,r})=1.
    \end{equation*}
    Thus $\mu_{k+1}(C_{r} \cup P_{n-r})=1$, which contradicts to $m_{C_{r} \cup P_{n-r}}(1)=0$. 
    Hence $m_{C_{n,r}}(1)=1$.

    (c) Let $l_{ij}$ be the $(i,j)$-entry of $L(C_{n,r})$. 
    To prove $L(C_{n,r})w=w$, we compute the coordinates of $L(C_{n,r})w$. 
    For $i\not\in \{1, r, n\}$, the entries of $L(C_{n,r})$ are
    \begin{equation*}
        l_{ij}=\left\{ 
            \begin{array}{rl}
                2, &  \text{if $j=i$,}\\
                -1, & \text{if $j=i-1, i+1$,}\\
                0, & \text{otherwise}.
            \end{array}
        \right.
    \end{equation*} 
    Since $\begin{bmatrix}
        w_{i-1} & w_i & w_{i+1}
    \end{bmatrix}^T$ is one of 
    \begin{equation*}
        c\begin{bmatrix}
            1 & 1 & 0
        \end{bmatrix}^T,~~
        c\begin{bmatrix}
            1 & 0 & -1
        \end{bmatrix}^T,~~\text{or}~~
        c\begin{bmatrix}
            0 & 1 & 1
        \end{bmatrix}^T,
    \end{equation*}
    where $c=\pm1,\pm2$, we obtain
    \begin{equation*}
        \begin{bmatrix}
            -1 & 2 & -1
        \end{bmatrix}
        \begin{bmatrix}
            w_{i-1} & w_i & w_{i+1}
        \end{bmatrix}^T=w_i.
    \end{equation*}
    Hence the $i$th coordinate of $L(C_{n,r})w$ is equal to $w_i$ for $i\not\in \{1,r,n\}$.
    Now, we examine the remaining cases $i\in\{1,r,n\}$. 
    Since  $w_1=w_2=1$, $w_r=0,$ and the entry of the first row of $L(C_{n,r})$ is
    \begin{equation*}
        l_{1j}=\left\{
            \begin{array}{rl}
                2, & \text{if $j=1$,}  \\
                -1, & \text{if $j=2$ and $r$,}\\
                0, & \text{otherwise,}
            \end{array}
        \right.
    \end{equation*}
    the first coordinate of $L(C_{n,r})w$ is $w_1=1$.
    Since $w_1=w_{r-1}=1$, $w_r=0$, $w_{r+1}=-2$ and  the entry of the $r$th row of $L(C_{n,r})$ is
    \begin{equation*}
        l_{rj}=\left\{
            \begin{array}{rl}
                3, & \text{if $j=r$,}  \\
                -1, & \text{if $j=1$, $r-1$ and $r+1$,}\\
                0, & \text{otherwise,}
            \end{array}
        \right.
    \end{equation*}
    the $r$th coordinate of $L(C_{n,r})w$ is $w_r=0$.
    Since $w_{n-1}=0$, $w_n=\pm 2$ and the entry of the last row of $L(C_{n,r})$ is
    \begin{equation*}
        l_{nj}=\left\{
            \begin{array}{rl}
                -1, & \text{if $j=n-1$,}  \\
                1, & \text{if $j=n$,}\\
                0, & \text{otherwise,}
            \end{array}
        \right.
    \end{equation*}
     the last coordinate of $L(C_{n,r})w$ is $w_n=\pm 2$.
    Thus $L(C_{n,r})w=w$.
    
    Now, we show that $m_{C_{n,r}}(1)=1$. 
    Suppose that $m=m_{C_{n,r}}(1)>1$.
    Then $$\mu_k(C_{n,r})=\cdots=\mu_{k+m-1}(C_{n,r})=1$$ for some $k$.
    Since $P_n$ is a subgraph of $C_{n,r}$,
    by Lemma~\ref{lem:interlacing}, 
    \begin{equation*}
        1=\mu_{k}(C_{n,r}) \leq \mu_{k+1}(P_n) \leq \mu_{k+1}(C_{n,r})=1.
    \end{equation*}
    Thus $\mu_{k+1}(P_n)=1$, which contradicts to $m_{P_n}(1)=0$.
    Hence $m_{C_{n,r}}(1)=1$.
\end{proof}

\subsection{Compass graphs}
Finally, we determine the lower bound for $m_G [0,1)$, where $G$ is a compass graph in Figure~\ref{fig:types}. We label the $n$ vertices of the graph as shown in Figure ~\ref{fig:compass}.
\begin{proposition}\label{prop:compass_1}
    Let $C_{n,r}(r',t)$ be a graph with $n$ vertices and girth $r$. 
    Suppose that $n\equiv t+\lceil \frac{r}{2} \rceil  \pmod{3}$ or $t \equiv0 \pmod{3}$. Then 
    \begin{equation*}
        m_{C_{n,r}(r',t)}[0,1) \geq \bigg\lceil\frac{d(C_{n,r}(r',t))}{3}\bigg\rceil + \bigg\lceil \frac{r}{6} \bigg\rceil -1.
    \end{equation*}
\end{proposition}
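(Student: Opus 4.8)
The plan is to reduce the compass graph to a disjoint union of a lollipop and a path by deleting a single edge, and then to import the bounds already proved for lollipop graphs. Concretely, let $e$ be the bridge edge joining the pendant path $P_t$ to its attachment vertex on the cycle. Then $C_{n,r}(r',t)-e = C_{n-t,r}\cup P_t$, where the lollipop $C_{n-t,r}$ has pendant path $P_s$ and diameter $d(C_{n-t,r}) = s+\lfloor r/2\rfloor$. Writing $d = d(C_{n,r}(r',t)) = r'+s+t$ and $k = \lceil d/3\rceil + \lceil r/6\rceil - 1$, the interlacing inequality (Theorem~\ref{lem:interlacing}) gives $\mu_k(C_{n,r}(r',t)) \le \mu_{k+1}(C_{n-t,r}\cup P_t)$. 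Hence it suffices to prove $\mu_{k+1}(C_{n-t,r}\cup P_t) < 1$, that is, $m_{C_{n-t,r}\cup P_t}[0,1) \ge k+1$; the single edge deletion costs exactly one index, which is why I need $k+1$ rather than $k$ small eigenvalues in the reduced graph.

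Second, I would estimate $m_{C_{n-t,r}\cup P_t}[0,1) = m_{C_{n-t,r}}[0,1) + m_{P_t}[0,1)$ using Proposition~\ref{prop:lollipop} for the lollipop factor and $m_{P_t}[0,1) = \lceil t/3\rceil$ for the path factor, and show the two hypotheses are precisely what make the ceilings add up to at least $k+1$. The hypothesis $n \equiv t+\lceil r/2\rceil \pmod 3$ is equivalent to $d(C_{n-t,r}) = s+\lfloor r/2\rfloor \equiv 0 \pmod 3$; when moreover $r\not\equiv 0\pmod 6$ this lets me replace Proposition~\ref{prop:lollipop} by the sharper Proposition~\ref{prop:lollipop-3}, gaining the extra $+1$ that pushes the count to $k+1$. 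The hypothesis $t\equiv 0\pmod 3$ makes $m_{P_t}[0,1) = t/3$ exact and, by splitting $\lceil(r'+s+t)/3\rceil = \lceil(r'+s)/3\rceil + t/3$, again aligns the ceilings. Combining these with the elementary inequality $\lceil(s+r')/3\rceil \le \lceil(s+\lfloor r/2\rfloor)/3\rceil$ (from $r'\le\lfloor r/2\rfloor$) and the subadditivity $\lceil(a+b)/3\rceil \le \lceil a/3\rceil + \lceil b/3\rceil$ should yield $m_{C_{n-t,r}\cup P_t}[0,1) \ge k+1$ in the generic situation.

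The hard part will be the borderline cases where the reduced graph has exactly $k+1$ eigenvalues in $[0,1)$ but its $(k+1)$-th eigenvalue equals $1$ — for instance when $r'=\lfloor r/2\rfloor$ and the relevant residues conspire so that the sharper lollipop bound is unavailable. In that situation interlacing only gives $\mu_k(C_{n,r}(r',t)) \le 1$, and I must rule out equality. Following the method of Cases 2--3 of Proposition~\ref{prop:lollipop}, I would suppose $\mu_k(C_{n,r}(r',t)) = \mu_{k+1}(C_{n-t,r}\cup P_t) = 1$ and apply the equality clause of Lemma~\ref{lem:eig_sum} to $L(C_{n-t,r}\cup P_t)$ and $M = L(C_{n,r}(r',t)) - L(C_{n-t,r}\cup P_t)$, the latter being permutationally similar to $L(K_2\cup(n-2)K_1)$ and hence having $\mu_{n-1}(M)=0$. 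This produces a common unit eigenvector $z$ with $L(C_{n-t,r}\cup P_t)z = z$ and $Mz = 0$; since $M$ encodes the single bridge edge $e$, the condition $Mz=0$ forces the two endpoints of $e$ to share the same coordinate in $z$.

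Finally, I would derive a contradiction from this coordinate condition using the explicit eigenvectors for the eigenvalue $1$. The restriction of $z$ to $P_t$ is a multiple of the vector of Lemma~\ref{lem:path-one}, whose coordinate at the bridge endpoint $q_1$ equals $x_1 = 1 \neq 0$, while the restriction to the lollipop $C_{n-t,r}$ is governed by Proposition~\ref{prop:r=01-lollipop} and Lemma~\ref{lem:cylce-one}, which tell me exactly when $1$ is an eigenvalue of $C_{n-t,r}$ and pin down the value of its eigenvector at the attachment vertex $w$. Comparing the two prescribed values at the endpoints of $e$ shows they cannot coincide, contradicting $Mz=0$. The main technical burden is thus the bookkeeping of residues modulo $6$ together with this eigenvector comparison; the reduction itself, the interlacing step, and the ceiling arithmetic are routine.
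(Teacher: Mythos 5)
Your reduction (delete the bridge edge $v_tv_{t+r'}$, pass to $P_t\cup C_{n-t,r}$, interlace) and your treatment of the hypothesis $n\equiv t+\lceil \frac{r}{2}\rceil \pmod 3$, i.e.\ $d(C_{n-t,r})\equiv 0\pmod 3$, with $r\not\equiv 0\pmod 6$ via the sharper Proposition~\ref{prop:lollipop-3} are exactly what the paper does. The gap is in the remaining cases. First, under $t\equiv 0\pmod 3$ the ceiling arithmetic gives only $m_{P_t\cup C_{n-t,r}}[0,1)\geq k$, not $k+1$ (e.g.\ when $r'=\lfloor \frac{r}{2}\rfloor$ and the lollipop bound is tight), so the borderline situation $\mu_{k+1}(P_t\cup C_{n-t,r})=1$ is typical, not exceptional. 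More seriously, your plan for that situation --- equality in Lemma~\ref{lem:eig_sum}, a common eigenvector $z$ with $Mz=0$, then a contradiction because the two bridge-endpoint coordinates ``cannot coincide'' --- fails precisely there. When $3\mid t$ the path $P_t$ contributes an eigenvector of eigenvalue $1$, and whenever the lollipop $C_{n-t,r}$ also has eigenvalue $1$ (which does happen in borderline cases, e.g.\ whenever $r\equiv 0\pmod 6$, by Proposition~\ref{prop:r=01-lollipop}(a)), the eigenspace of $P_t\cup C_{n-t,r}$ at $1$ is at least two-dimensional. If the lollipop eigenvector vanishes at the attachment vertex (as happens when $r'\equiv 0\pmod 3$, e.g.\ $r'=\frac{r}{2}$), then $z$ supported on the lollipop alone already satisfies $Mz=0$; if it does not vanish, both endpoint values are $\pm 1$ and you can rescale the two components so that they agree. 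Either way a genuine common eigenvector exists, and indeed $C_{n,r}(r',t)$ really can have $1$ as a Laplacian eigenvalue --- that is exactly Lemma~\ref{lem:comp-1} --- so no contradiction can be extracted from eigenvector analysis in this case.

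The paper closes this case with an idea your proposal is missing: a multiplicity count rather than an eigenvector comparison. Since $3\mid t$, Corollary~\ref{coro:path} pins down $m_G(1)=m_{C_{n-t,r}}(1)=l$ exactly, while $m_{P_t\cup C_{n-t,r}}(1)=l+1$. If $\mu_k(G)\geq 1$, then interlacing gives
\begin{equation*}
1\leq \mu_k(G)\leq \mu_{k+1}(P_t\cup C_{n-t,r})\leq \mu_{k+1}(G)\leq \cdots \leq \mu_{k+l}(G)\leq \mu_{k+l+1}(P_t\cup C_{n-t,r})\leq 1,
\end{equation*}
forcing $\mu_k(G)=\cdots=\mu_{k+l}(G)=1$ and hence $m_G(1)\geq l+1$, contradicting $m_G(1)=l$. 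The point is not that $1$ cannot be an eigenvalue of $G$, but that it cannot occur as early as position $k$. Finally, in the sub-case $d(C_{n-t,r})\equiv 0\pmod 3$ with $r\equiv 0\pmod 6$ (where Proposition~\ref{prop:lollipop-3} is unavailable), the paper observes that then $s\equiv 0\pmod 3$ and deletes the other pendant path, considering $P_s\cup C_{n-s,r}$ instead, which reduces to the case just described; your proposal has no substitute for this switch either. Both of these devices would need to be added for your argument to go through.
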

\begin{proof}
    We set $G = C_{n,r}(r',t)$. 
    Note that $s=n-r-t$ and $r'=d(G)-t-s$.
    Let $k=\lceil\frac{d(G)}{3}\rceil + \left\lceil \frac{r}{6} \right\rceil -1$.
    A subgraph $P_t \cup C_{n-t,r}$ can be obtained from $G$ by deleting the edge $v_tv_{t+r'}$, where $v_t$ and $v_{t+r'}$ are labeled as in Figure~\ref{fig:compass}.
    Let $\alpha=\lfloor \frac{r}{2} \rfloor - r'$. 
    Since $\lfloor \frac{r}{2} \rfloor \geq r'$, we have $\alpha \geq 0$ .
    Note that $d(C_{n-t,r})+t=d(G)+\alpha$. 

    {\bf Case 1. }Suppose that $t \equiv 0 \pmod{3}$.
    By Corollary \ref{coro:path}, we have $m_{G}(1)=m_{C_{n-t,r}}(1)$. 
    Put $l=m_{C_{n-t,r}}(1)$. Then $m_{P_t \cup C_{n-t,r}}(1)=l+1$. 
    Note that 
    \begin{align*}
        m_{P_t \cup C_{n-t,r}}[0,1) \geq &\left\lceil\frac{d(C_{{n-t},r})}{3}\right\rceil +\bigg\lceil \frac{r}{6} \bigg\rceil -1 +\left\lceil\frac{t}{3} \right\rceil\\
        = & \left\lceil\frac{d(G)+\alpha}{3}\right\rceil +\bigg\lceil \frac{r}{6} \bigg\rceil -1\\
        \geq & \left\lceil\frac{d(G)}{3}\right\rceil +\bigg\lceil \frac{r}{6} \bigg\rceil -1=k.
    \end{align*}
    By Lemma~\ref{lem:interlacing}, we obtain $\mu_{k-1}(G) \leq \mu_{k}(P_t \cup C_{n-t,r}) <1$.
    Suppose that $\mu_k(G) \geq 1$. By Lemma \ref{lem:interlacing}, we have
    \begin{equation*}
        1\leq \mu_k(G) \leq \mu_{k+1}(P_t \cup C_{n-t,r})\leq \mu_{k+1}(G) \leq \cdots \leq \mu_{k+l}(G) \leq \mu_{k+l+1}(P_t \cup C_{n-t,r})=1.
    \end{equation*}
    It follows that 
    \begin{equation*}
        \mu_k(G) = \mu_{k+1}(G) =\cdots= \mu_{k+l}(G)=1.
    \end{equation*}
    Thus, we obtain $m_G(1)=l+1$. 
    This contradicts to $m_G(1)=l$.
    Thus, $\mu_k(G)<1$ and hence $m_G[0,1) \geq k$.
    
    {\bf Case 2. }Suppose that $d(C_{{n-t},r})=n-t-\lceil \frac{r}{2} \rceil \equiv0 \pmod{3}$.
    If $r\not \equiv 0 \pmod{6}$, by Proposition \ref{prop:lollipop-3}, we have 
    \begin{align*}
        m_{P_t \cup C_{n-t,r}}[0,1) \geq &\left\lceil\frac{d(C_{{n-t},r})}{3}\right\rceil +\bigg\lceil \frac{r}{6} \bigg\rceil +\left\lceil\frac{t}{3} \right\rceil\\
        = & \left\lceil\frac{d(G)+\alpha}{3}\right\rceil +\bigg\lceil \frac{r}{6} \bigg\rceil\\
        \geq & \left\lceil\frac{d(G)}{3}\right\rceil +\bigg\lceil \frac{r}{6} \bigg\rceil=k+1.
    \end{align*}
    Thus $\mu_k(G)\leq \mu_{k+1}(P_t \cup C_{n-t,r})<1$ and hence $m_G[0,1) \geq k$.
    If $r \equiv 0 \pmod{6}$, then $0 \equiv d(C_{{n-t},r}) =\lfloor \frac{r}{2} \rfloor +s \equiv s \pmod{3}$.
    In this case, we consider the subgraph $P_s \cup C_{n-s,r}$. 
    Since $s \equiv 0 \pmod{3}$, by Case 1, we obtain $m_G[0,1) \geq k$.
\end{proof}
\begin{lemma}\label{lem:comp-1}
    Let $C_{n,r}(r',t)$ be a graph with $n$ vertices and girth $r$. 
    Suppose that $r \equiv 0 \pmod{6}$ and $r'=\frac{r}{2}$.
    Then $C_{n,r}(r',t)$ has a Laplacian eigenvalue 1.
\end{lemma}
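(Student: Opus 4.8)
The plan is to exhibit an explicit eigenvector of $L(C_{n,r}(r',t))$ for the eigenvalue $1$ that is supported entirely on the cycle and vanishes at both degree-$3$ attachment vertices, extended by zero along the two pendant paths; this generalizes the construction used in Proposition~\ref{prop:r=01-lollipop}(a), where a single attachment point forced the use of only one of the cycle eigenvectors. Write $G = C_{n,r}(r',t)$ and let $a$ and $b$ denote the two cycle vertices of degree $3$, where the paths $P_s$ and $P_t$ are attached. Since $r' = r/2$, these two vertices are antipodal on $C_r$; that is, their positions differ by $r/2$ in the cyclic order, whichever way one travels around the cycle.

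Because $6 \mid r$, Lemma~\ref{lem:cylce-one} supplies two linearly independent eigenvectors $x$ and $y$ of $L(C_r)$ for the eigenvalue $1$. First I would record how a shift of the index by $r/2$ acts on these vectors. Reducing modulo $6$, one has $r/2 \equiv 0$ or $3 \pmod{6}$ according as $r \equiv 0$ or $6 \pmod{12}$; a direct inspection of the $6$-periodic patterns defining $x$ and $y$ shows that in the first case $x_{i+r/2} = x_i$ and $y_{i+r/2} = y_i$, while in the second case $x_{i+r/2} = -x_i$ and $y_{i+r/2} = -y_i$. In either case the shift acts as a single global sign on both $x$ and $y$, so a vector $\tilde z = \alpha x + \beta y$ vanishes at $b$ exactly when it vanishes at $a$.

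Next I would use this to kill both attachment values simultaneously. Since the residues modulo $6$ on which $x$ and $y$ vanish are disjoint, the pair $(x_a, y_a)$ is never $(0,0)$, so $\tilde z := y_a\, x - x_a\, y$ is a nonzero eigenvector of $L(C_r)$ for the eigenvalue $1$ with $\tilde z_a = 0$, and hence $\tilde z_b = 0$ as well by the previous paragraph. Define $z$ on $G$ by setting $z_i = \tilde z_i$ at the cycle vertices and $z_i = 0$ at all path vertices, and verify $L(G) z = z$ row by row. At a cycle vertex of degree $2$ the relevant row of $L(G)$ coincides with that of $L(C_r)$, so the equation reduces to $L(C_r)\tilde z = \tilde z$. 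At a degree-$3$ attachment vertex $c \in \{a,b\}$ the row contributes $3 z_c - z_{c^-} - z_{c^+} - z_p$, where $p$ is the incident path vertex; using $z_p = 0$ together with the cycle relation $z_{c^-} + z_{c^+} = z_c$, this equals $2 z_c$, which matches $z_c$ precisely because $z_c = \tilde z_c = 0$. At every path vertex all relevant coordinates of $z$ vanish, so the equation holds trivially, and therefore $1$ is a Laplacian eigenvalue of $G$.

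I expect the only real obstacle to be the sign bookkeeping in the second step, namely verifying that a shift of the index by $r/2$ multiplies both $x$ and $y$ by the same global sign: this is exactly what forces the two vanishing conditions to coincide and thereby guarantees that a nonzero eigenvector exists. Once $z_c = 0$ is secured at both degree-$3$ vertices, the remaining verification is routine, and no dependence on the precise labeling of Figure~\ref{fig:compass} is needed beyond the distance $r/2$ between the two attachment points.
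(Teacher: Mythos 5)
Your proof is correct and takes essentially the same approach as the paper: both exhibit a Laplacian $1$-eigenvector of $C_r$ that vanishes at the two degree-$3$ attachment vertices and extend it by zero to the whole compass graph. The only difference is bookkeeping: under the paper's labeling the attachment points sit at cycle positions congruent to $r'\equiv 0,3 \pmod{6}$ and $r\equiv 0\pmod{6}$, where the single vector $y$ of Lemma~\ref{lem:cylce-one} already vanishes, so the paper simply takes $z$ equal to $y$ on the cycle and checks $L(G)z=z$ via the edge-deletion decomposition together with Proposition~\ref{prop:r=01-lollipop}(a), whereas you construct the labeling-independent combination $y_a x - x_a y$ (justified by the antipodal shift-sign argument) and verify the eigenvector equation row by row.
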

\begin{proof}
    Let $G=C_{n,r}(r',t)$. 
    and let $P_t \cup C_{n-t,r}$ be a subgraph of $G$ obtained by deleting the edge $v_tv_{t+r'}$, where $v_t$ and $v_{t+r'}$ are labeled as in Figure~\ref{fig:compass}.
    Note that $L(G)-L(P_t \cup C_{n-t,r})=A=(a_{ij})$, where
    \begin{equation*}
        a_{ij}=\left\{\begin{array}{rl}
            1, & \mbox{if $(i,j) \in \{(t,t),~(t+r',t+r')\}$,}\\
            -1, & \mbox{if $(i,j) \in \{(t,t+r'),~(t+r',t)\}$,}\\
            0, & \mbox{otherwise.}
        \end{array}\right.
    \end{equation*}
    Let $y$ be the eigenvector corresponding to the Laplacian eigenvalue 1 of $C_r$ in Lemma~\ref{lem:cylce-one} and let $z$ be a vector of order $n$ with coordinates 
    \begin{equation*}
        z_i=\left\{ 
        \begin{array}{rl}
            y_{i-t}, & \mbox{if $i=t+1,\ldots, t+r$,} \\
            0, & \mbox{otherwise.}
        \end{array}
        \right.
    \end{equation*}
    Since $z_t=0$ and $z_{t+r'}=y_{r'}=0$, we have 
    $Az=0$.
    Then, by Proposition~\ref{prop:r=01-lollipop}~(a), 
    $$L(G)z=L(P_t \cup C_{n-t,r})z+Az=z.$$
    Thus $G$ has a Laplacian eigenvalue 1.
\end{proof}
\begin{figure}
    \centering
    \includegraphics[width=16cm]{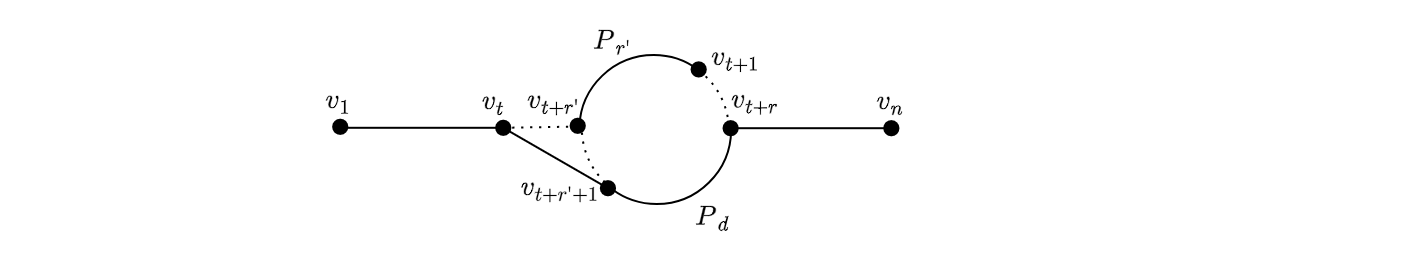}
    \caption{The graph $P_d \cup P_{r'}$ obtained from $C_{n,r}(r',t)$, where $d=d(C_{n,r}(r,t))$}
    \label{fig:com-path}
\end{figure}
A lower bound for the compass graph is provided under specific conditions in the following.
\begin{proposition}\label{prop:comp-r6}
     Let $C_{n,r}(r',t)$ be a graph with $n$ vertices and girth $r$.
     Suppose that $n \equiv 0 \pmod{3}$, $r \equiv 0 \pmod{6}$, $r'=\frac{r}{2}$, and $t \equiv 1 \pmod{3}$.
     Then $$m_{C_{n,r}(r',t)}[0,1) \geq \bigg\lceil\frac{d(C_{n,r}(r',t))}{3}\bigg\rceil + \bigg\lceil \frac{r}{6} \bigg\rceil.$$
\end{proposition}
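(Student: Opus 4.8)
The plan is to determine $m_G[0,1)$ by squeezing it between two crude interlacing estimates and then using a single parity computation, extracted from the characteristic polynomial, to decide which value in the resulting short range actually occurs. Write $G=C_{n,r}(r',t)$, $d=d(G)$, and $k=\lceil d/3\rceil+\lceil r/6\rceil$. First I would unwind the hypotheses. Since $r\equiv 0\pmod 6$ we have $r'=\frac r2\equiv 0\pmod 3$ and $\lceil r/6\rceil=\frac r6$, while $d=r'+t+s=n-\frac r2\equiv 0\pmod 3$, so $\lceil d/3\rceil=\frac d3$ and
\[
k=\frac d3+\frac r6=\frac{n-r/2}{3}+\frac r6=\frac n3 .
\]
In particular $n=3k$, hence $n-k=2k$ is even and $n\equiv k\pmod 2$, whereas $k-1$ has the opposite parity. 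This parity coincidence is exactly the mechanism that will let one extra bit of information raise the bound by one.

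Next I would establish the two-sided estimate $k-1\le m_G[0,1)\le k+1$. Deleting the edge joining $P_t$ to the cycle produces the subgraph $H=P_t\cup C_{n-t,r}$; by Proposition~\ref{prop:lollipop} and the path count, $m_H[0,1)=m_{P_t}[0,1)+m_{C_{n-t,r}}[0,1)\ge\frac{t+2}{3}+\big(\frac{n-t+1}{3}-1\big)=k$, so $\mu_k(H)<1$. Theorem~\ref{lem:interlacing} then gives $\mu_{k-1}(G)\le\mu_k(H)<1$, i.e.\ $m_G[0,1)\ge k-1$. For the upper estimate I would note that the lollipop $C_{n-t,r}$ contains a spanning path, so $G$ contains $P_t\cup P_{n-t}$ as a spanning subgraph; since adding edges cannot decrease Laplacian eigenvalues, $m_G[0,1)\le m_{P_t}[0,1)+m_{P_{n-t}}[0,1)=\frac{t+2}{3}+\frac{n-t+1}{3}=k+1$. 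Thus $m_G[0,1)\in\{k-1,k,k+1\}$.

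The heart of the proof is then a sign (parity) computation. Writing $\phi(G;x)=\prod_i\big(x-\mu_i(G)\big)$, for $x=1-\varepsilon$ with $\varepsilon>0$ small one has that the sign of $\phi(G;1^-)$ equals $(-1)^{\,n-m_G[0,1)}$. Because $n\equiv k\pmod 2$, the candidate values $k-1,k,k+1$ give signs $-,+,-$, so proving that $\phi(G;1^-)>0$ forces $m_G[0,1)=k$, which is the assertion. By Lemma~\ref{lem:comp-1} the graph has the eigenvalue $1$, so $\phi(G;1)=0$ and the sign of $\phi(G;1^-)$ is governed by the first non-vanishing derivative of $\phi(G;\cdot)$ at $1$. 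I would compute this by attaching $P_t$ to the lollipop $C_{n-t,r}$ at the cycle vertex $v$ at distance $r'$ from the degree-$3$ vertex and applying Lemma~\ref{lem:char-joining}:
\[
\phi(G)=\phi(P_t)\big(\phi(C_{n-t,r})-\phi(L_v(C_{n-t,r}))\big)-\phi(B_{t-1})\,\phi(C_{n-t,r}),
\]
then expanding $\phi(C_{n-t,r})$ by Lemma~\ref{lem:char-lollipop} and reducing with Lemma~\ref{lem:char-PC}. Since $r\equiv 0\pmod 6$, Proposition~\ref{prop:r=01-lollipop}(a) gives $\phi(C_{n-t,r};1)=0$ with a \emph{simple} root (as $n-t\equiv 2\pmod 3$), and because $\phi(P_t;1)=1$ the identity above evaluated at $1$ also forces $\phi(L_v(C_{n-t,r});1)=0$. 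Differentiating the identity at $x=1$ and using these vanishings leaves
\[
\phi'(G;1)=\big(\phi(P_t;1)-\phi(B_{t-1};1)\big)\,\phi'(C_{n-t,r};1)-\phi(P_t;1)\,\phi'(L_v(C_{n-t,r});1),
\]
which, after inserting the residues $\phi(P_m;1)\in\{0,\pm1\}$ and the analogous $B$-values, becomes a finite expression whose sign I would evaluate; showing $\phi'(G;1)<0$ is equivalent to $\phi(G;1^-)>0$.

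The main obstacle is precisely this last evaluation. Unlike Proposition~\ref{prop:lollipop-3}, where $\phi(C_{n,r};1)\neq 0$ could be read from a short table, here both $\phi(C_{n-t,r};1)$ and $\phi(L_v(C_{n-t,r});1)$ vanish, so one must control the \emph{first-order} behaviour at $1$ — equivalently the reduced polynomial $\phi(G;x)/(x-1)^{m_G(1)}$ at $1$ — by differentiating products of characteristic polynomials that each vanish there and threading the result through the nested lollipop and path recursions of Lemmas~\ref{lem:char-joining}--\ref{lem:char-lollipop}. I expect this bookkeeping, keeping track of the parities of $n$ and $k$ and of the residues that enter $\phi'(C_{n-t,r};1)$ and $\phi'(L_v(C_{n-t,r});1)$, to be the only delicate step; the modular arithmetic of the first paragraph and the two interlacing inequalities of the second are routine. (It is worth emphasising why a softer argument fails: the equality case $\mu_k(G)=1$ cannot be excluded via Lemma~\ref{lem:eig_sum} as in Proposition~\ref{prop:lollipop}, because the eigenvalue-$1$ eigenvector of $H$ vanishes at the reconnection vertex $v$ — there $y_{r'}=y_{r/2}=0$ since $r\equiv 0\pmod 6$ — so restoring the deleted edge annihilates it and yields no contradiction; this is exactly what necessitates the finer polynomial computation.)
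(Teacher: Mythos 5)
Your framework is internally consistent up to its final step, and it is genuinely different from the paper's: the squeeze $k-1\le m_G[0,1)\le k+1$ is correct (the lower bound is exactly the paper's first move, via $P_t\cup C_{n-t,r}$ and Theorem~\ref{lem:interlacing}; the upper bound via the spanning subgraph $P_t\cup P_{n-t}$ is fine), the parity identity $\operatorname{sign}\phi(G;1^-)=(-1)^{\,n-m_G[0,1)}$ is correct, and your closing parenthetical correctly diagnoses why the naive Weyl-equality argument on the single deleted edge $v_tv_{t+r'}$ yields no contradiction (the eigenvalue-$1$ eigenvector of $C_{n-t,r}$ vanishes at the reattachment vertex) --- this is precisely the obstruction the paper works around. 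The gap is that you never carry out the one step on which everything hinges: proving $\phi'(G;1)<0$. Your derivative identity is right, and in fact it collapses further: by Lemma~\ref{lem:char-PC}(a), $\phi(B_{t-1};1)=\phi(P_t;1)+\phi(P_{t-1};1)=1=\phi(P_t;1)$ since $t\equiv 1\pmod 3$, so the coefficient of $\phi'(C_{n-t,r};1)$ vanishes and $\phi'(G;1)=-\phi'(L_v(C_{n-t,r});1)$. But no argument for the sign of this quantity is given; you only say you ``would evaluate'' it. This is not residue bookkeeping of the kind used in Proposition~\ref{prop:lollipop-3}: unlike $\phi(P_m;1)\in\{0,\pm1\}$, derivative values at $1$ are unbounded and not periodic in the parameters (differentiating the recurrence $\phi(P_{m+1})=(x-2)\phi(P_m)-\phi(P_{m-1})$ gives, e.g., $\phi'(P_{3j};1)=-2j$), and deciding the sign of $\phi'(L_v(C_{n-t,r});1)$ amounts to counting how many eigenvalues of the Dirichlet-type matrix $L_v(C_{n-t,r})$ exceed $1$ --- an eigenvalue-location problem of essentially the same nature and difficulty as the proposition itself. (There is also a secondary loose end: your criterion ``$\phi'(G;1)<0$ iff $\phi(G;1^-)>0$'' is only an implication in the direction you need, and would be vacuous if $m_G(1)\ge 2$, a case your outline does not exclude.) Until that sign is actually proved for all admissible $n,r,t$, there is no proof.

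For contrast, the paper closes this step with a device that avoids characteristic polynomials entirely. Assuming $\mu_k(G)\ge 1$, interlacing against $P_t\cup C_{n-t,r}$ (which has eigenvalue $1$ by Proposition~\ref{prop:r=01-lollipop}(a)) forces $\mu_k(G)=1$; the paper then compares $G$ not with a subgraph but with $P_d\cup P_{r'}$, obtained by deleting the three edges $v_tv_{t+r'}$, $v_{t+1}v_{t+r}$, $v_{t+r'}v_{t+r'+1}$ and adding the edge $v_tv_{t+r'+1}$. The perturbation $B=L(G)-L(P_d\cup P_{r'})$ has inertia $\mu_1(B)=-1$, $\mu_2(B)=\cdots=\mu_{n-2}(B)=0$, $\mu_{n-1}(B)=2$, $\mu_n(B)=3$, so $\mu_k(G)=\mu_{k+2}(P_d\cup P_{r'})+\mu_{n-2}(B)$, and the equality case of Lemma~\ref{lem:eig_sum} demands a common eigenvector of $L(G)$, $L(P_d\cup P_{r'})$ and $B$. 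Since $d\equiv 0$ and $r'\equiv 0\pmod 3$, the eigenvalue-$1$ eigenspace of $P_d\cup P_{r'}$ is explicitly described via Lemma~\ref{lem:path-one}, and one checks coordinate-by-coordinate that $B$ annihilates none of its members --- a contradiction. If you wish to salvage your route, you must supply the missing sign computation (say by expanding $\phi(L_v(C_{n-t,r}))$ through Lemmas~\ref{lem:char-joining} and~\ref{lem:char-PC} and controlling the growing derivative terms); otherwise, adopting the paper's edge-swap comparison is the cleaner repair.
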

\begin{proof}
    To simplify the notation, put $G=C_{n,r}(r',t)$ and $d=d(C_{n,r}(r',t))$. Let $k=\big\lceil\frac{d}{3}\big\rceil + \big\lceil \frac{r}{6} \big\rceil$.
    First, we consider a subgraph $P_t \cup C_{n-t,r}$ obtained from $G$ by removing the edge $v_tv_{t+r'}$. 
    Note that $t \equiv 1 \pmod{3}$ and 
    $d(C_{n-t,r})=n-t-\lceil\frac{r}{2}\rceil \equiv 2\pmod{3}$.
    Then
    \begin{equation*}
        m_{P_t \cup C_{n-t,r}}[0,1) \geq \left\lceil\frac{d(C_{{n-t},r})}{3}\right\rceil +\bigg\lceil \frac{r}{6} \bigg\rceil -1+\left\lceil\frac{t}{3} \right\rceil
        = \left\lceil\frac{d(G)}{3}\right\rceil +\bigg\lceil \frac{r}{6} \bigg\rceil=k.
    \end{equation*}
    By Lemma~\ref{lem:interlacing}, we have $\mu_{k-1}(G) \leq \mu_k(P_t \cup C_{n-t,r})<1$ and hence $m_G[0,1)\geq k-1$.
    Suppose that $\mu_{k}(G)\geq1$.
    Since the graph $P_t \cup C_{n-t,r}$ has the Laplacian eigenvalue 1, by Lemma~\ref{lem:comp-1},
    we obtain $\mu_{k}(G)=1$.
    Now, we consider the graph $P_d \cup P_{r'}$ which is obtained from $G$ by deleting three edges $v_tv_{t+r'}$, $v_{t+1}v_{t+r}$, and $v_{t+r'}v_{t+r'+1}$ and adding an edge $v_tv_{t+r'+1}$ (see Figure~\ref{fig:com-path}).
    Since $d \equiv 0 \pmod{3}$ and $r' \equiv 0 \pmod{3}$, we have 
    \begin{equation*}
        \mu_{k+1}(P_d \cup P_{r'})=\mu_{k+2}(P_d \cup P_{r'})=1.
    \end{equation*}
    Note that $L(G)-L(P_d \cup P_{r'})=B=(b_{ij})$, where
    \begin{equation*}
         b_{ij}=\left\{\begin{array}{rl}
            2, & \mbox{if $(i,j) \in \{(t+r',t+r')\}$,}\\
            1, & \mbox{if $(i,j) \in \{(t,t+r'+1),(t+1,t+1),(t+r'+1,t),(t+r,t+r)\}$,}\\
            -1, & \mbox{if $(i,j) \in \{(t,t+r'),(t+1,t+r),(t+r',t),(t+r',t+r'+1),$}\\
             &\qquad\qquad \mbox{~~$(t+r'+1,t+r'),(t+r,t+1)\},$}\\
            0, & \mbox{otherwise.}
        \end{array}\right.
    \end{equation*}
    Since $B$ is permutationally similar to
    \begin{equation*}
        \left[\begin{array}{rrrrr}
            0 & 0 & -1 & 1 & 0\\
            0 & 1 & 0 & 0 & -1\\
            -1 & 0 & 2 & -1 & 0\\
            1 & 0 & -1 & 0 & 0\\
            0 & -1 & 0 & 0 & 1
        \end{array}\right] \oplus O_{n-5},
    \end{equation*}
    where $O_{n-5}$ is the square zero matrix of order $n-5$, we have
    \begin{equation*}
        \mu_1(B)=-1,~\mu_2(B)=\cdots=\mu_{n-2}(B)=0,~ \mu_{n-1}(B)=2,~~\text{and}~~\mu_n(B)=3.
    \end{equation*}
     Thus $\mu_k(G)=\mu_{k+2}(P_d\cup P_{r'})+\mu_{n-2}(B)$.
     By Lemma~\ref{lem:eig_sum}, there exists an eigenvector $z$ such that $$L(G)z=\mu_k(G)z=z,$$ $$L(P_d\cup P_{r'})z=\mu_{k+2}(P_d\cup P_{r'})z=z,~~\text{and}$$ $$Bz=\mu_{n-2}(B)z=0.$$
     Now, we show that there is no common eigenvector satisfying the above three equations.
     Let $x$ and $x'$ be the eigenvector corresponding to the Laplacian eigenvalue 1 of $P_d$ and $P_{r'}$, respectively.
   We define two vectors $z$ and $z'$ of order $n$ such that 
    \begin{equation*}
        z_i=\left\{\begin{array}{cl}
            x_i, &  \mbox{if $i \in \{1,\ldots,t\}$},\\
            0, &  \mbox{if $i\in \{t+1,\ldots,t+r'\}$},   \\
            x_{i-r'}, & \mbox{if $i \in\{t+r'+1,\ldots,n\}$}
        \end{array}
        \right.
        ~~\text{and}~~
        z'_i=\left\{\begin{array}{cl}
            x'_{i-t}, &  \mbox{if $i\in \{t+1,\ldots,t+r'\}$},   \\
            0, & \mbox{otherwise}.
        \end{array}
        \right.
    \end{equation*}
    Then $z$ and $z'$ are eigenvectors corresponding to the Laplacian eigenvalue 1 of $P_d \cup P_{r'}$.
    First, we show that $Bz \neq 0$.
    Note that 
    \begin{equation*}
        z_t=x_t,~z_{t+1}=z_{t+r'}=0, ~ z_{t+r'+1}=x_{t+1},~\text{and}~z_{t+r}=x_{t+r'}.
    \end{equation*}
    Suppose that $r=6l$ for some odd integer $l$. Then $r' \equiv 3 \pmod{6}$.
    If $t \equiv 1 \pmod{6}$, then
    $$z_t=1,~z_{t+r'+1}=0 ~~\text{and}~~ z_{t+r}=-1.$$
    If $t \equiv 4 \pmod{6}$, then
    $$z_t=-1,~ z_{t+r'+1}=0 ~~\text{and}~~ z_{t+r}=1.$$
    Suppose that $r=6l'$ for some even integer $l'$. Then $r' \equiv 0 \pmod{6}$.
    If $t \equiv 1 \pmod{6}$, then
    $$z_t=1,~z_{t+r'+1}=0  ~~\text{and}~~ z_{t+r}=1.$$
    If $t \equiv 4 \pmod{6}$, then
    $$z_t=-1,~z_{t+r'+1}=0  ~~\text{and}~~ z_{t+r}=-1.$$
    In each case, we obtain $Bz \neq 0$.
    Now, we prove that $Bz' \neq 0$.
    Note that 
    \begin{equation*}
        z'_t =0,~z'_{t+1}=x'_1=1,~ z'_{t+r'}=x'_{r'},~\text{and}~ z'_{t+r'+1} =z'_{t+r}=0.
    \end{equation*}
    Since 
    \begin{equation*}
        z'_{t+r'}=\left\{\begin{array}{rl}
            1 & \mbox{if $r' \equiv 0 \pmod{6}$,} \\
            -1 & \mbox{if $r' \equiv 3 \pmod{6}$,}
        \end{array}\right.
    \end{equation*}
    we have
    $Bz' \neq0$. This is a contradiction to $\mu_k(G)\geq 1$.
    Hence $m_G[0,1) \geq k$.
\end{proof}

\begin{proposition}\label{prop:compass_2}
    Let $C_{n,r}(r',t)$ be a graph with $n$ vertices and girth $r$. 
    Suppose that $n\not\equiv t+\lceil \frac{r}{2} \rceil  \pmod{3}$ and $t\not\equiv 0 \pmod{3}$. 
    Then 
    \begin{equation*}
        m_{C_{n,r}(r',t)}[0,1) \geq \bigg\lceil\frac{d(C_{n,r}(r',t))}{3}\bigg\rceil + \bigg\lceil \frac{r}{6} \bigg\rceil -1.
    \end{equation*}
\end{proposition}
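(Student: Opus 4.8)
The strategy is to mimic the proof of Proposition~\ref{prop:compass_1}: delete the edge $v_tv_{t+r'}$ to split off the pendant path, obtaining the subgraph $P_t\cup C_{n-t,r}$, and then control $m_G[0,1)$ through the interlacing inequality of Lemma~\ref{lem:interlacing}. Put $G=C_{n,r}(r',t)$, $s=n-r-t$, $\alpha=\lfloor r/2\rfloor-r'\ge 0$, and $k=\lceil d(G)/3\rceil+\lceil r/6\rceil-1$, and recall the identity $d(C_{n-t,r})+t=d(G)+\alpha$. By the path count of the Remark following Corollary~\ref{coro:path} together with Proposition~\ref{prop:lollipop},
\[
m_{P_t\cup C_{n-t,r}}[0,1)=\Big\lceil\tfrac{t}{3}\Big\rceil+m_{C_{n-t,r}}[0,1)\ \ge\ \Big\lceil\tfrac{t}{3}\Big\rceil+\Big\lceil\tfrac{d(C_{n-t,r})}{3}\Big\rceil+\Big\lceil\tfrac{r}{6}\Big\rceil-1 .
\]
The two hypotheses are exactly $t\not\equiv 0\pmod 3$ and $d(C_{n-t,r})=n-t-\lceil r/2\rceil\not\equiv 0\pmod 3$.

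First I would settle the generic situation by a residue computation for the sum of ceilings. When $t\not\equiv 0$ and $d(C_{n-t,r})\not\equiv 0\pmod 3$ one checks that
\[
\Big\lceil\tfrac{t}{3}\Big\rceil+\Big\lceil\tfrac{d(C_{n-t,r})}{3}\Big\rceil\ \ge\ \Big\lceil\tfrac{d(C_{n-t,r})+t}{3}\Big\rceil+1 =\Big\lceil\tfrac{d(G)+\alpha}{3}\Big\rceil+1\ \ge\ \Big\lceil\tfrac{d(G)}{3}\Big\rceil+1,
\]
the single exception being $t\equiv d(C_{n-t,r})\equiv 2\pmod 3$ with $\alpha=0$. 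Away from this exception the subgraph already has $m_{P_t\cup C_{n-t,r}}[0,1)\ge k+1$, so Lemma~\ref{lem:interlacing} gives $\mu_k(G)\le\mu_{k+1}(P_t\cup C_{n-t,r})<1$ and hence $m_G[0,1)\ge k$.

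It remains to treat the exceptional case $t\equiv d(C_{n-t,r})\equiv 2\pmod 3$ and $r'=\lfloor r/2\rfloor$, in which $d(G)\equiv 1\pmod 3$ and the bound above yields only $m_{P_t\cup C_{n-t,r}}[0,1)\ge k$. Here I would argue by contradiction, assuming $m_G[0,1)<k$, i.e. $\mu_k(G)\ge 1$, and split on $r\bmod 6$. Since $t\equiv 2\pmod 3$, the path $P_t$ carries no eigenvalue $1$, so an eigenvalue $1$ of $P_t\cup C_{n-t,r}$, if present, must come from the lollipop; testing $N=n-t$ against Proposition~\ref{prop:r=01-lollipop} shows $C_{n-t,r}$ has the eigenvalue $1$ exactly for $r\equiv 0,1,3\pmod 6$. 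For $r\equiv 1,3\pmod 6$, either $m_{P_t\cup C_{n-t,r}}[0,1)\ge k+1$ and interlacing already finishes, or $\mu_{k+1}(P_t\cup C_{n-t,r})=1$, whence Lemma~\ref{lem:interlacing} forces $\mu_k(G)=1$; writing $M=L(G)-L(P_t\cup C_{n-t,r})$, which is permutationally similar to $L(K_2\cup(n-2)K_1)$ so $\mu_{n-1}(M)=0$, the equality criterion of Lemma~\ref{lem:eig_sum} produces a common unit eigenvector $z$ with $L(G)z=z$, $L(P_t\cup C_{n-t,r})z=z$, and $Mz=0$. As $z$ is then the eigenvalue-$1$ eigenvector of the lollipop extended by $0$ on $P_t$, I would read off its coordinate at the attachment vertex $v_{t+r'}$ from the explicit vectors of Lemmas~\ref{lem:path-one} and \ref{lem:cylce-one}, find it nonzero, and conclude $Mz\ne 0$, a contradiction. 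For $r\equiv 0\pmod 6$ the coordinate at $v_{t+r'}$ vanishes and this edge deletion is inconclusive; instead I would pass to the graph $P_d\cup P_{r'}$ obtained, as in Proposition~\ref{prop:comp-r6}, by deleting three edges and adding one, and use the explicit difference matrix $B$ there: since $d\equiv 1\pmod 3$ only $P_{r'}$ contributes an eigenvalue $1$, so the equality case of Lemma~\ref{lem:eig_sum} is ruled out by showing $Bz'\ne 0$ for the single generating eigenvector $z'$. For the remaining classes $r\equiv 2,4,5\pmod 6$, where the lollipop has no eigenvalue $1$, I would instead establish $\mu_k(G)<1$ directly by the characteristic-polynomial sign argument of Proposition~\ref{prop:lollipop-3}, comparing the signs of $\phi(C_{n,r}(r',t);\alpha_0)$ for $\alpha_0\in(\mu_{k-1}(G),\mu_k(G))$ and $\phi(C_{n,r}(r',t);1)$ via Lemmas~\ref{lem:char-joining}--\ref{lem:char-lollipop}.

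The main obstacle is precisely this exceptional case $t\equiv d(C_{n-t,r})\equiv 2\pmod 3$, $r'=\lfloor r/2\rfloor$, where single-edge interlacing loses one eigenvalue and the argument must be refined according to $r\bmod 6$. The most delicate sub-cases are $r\equiv 2,4,5\pmod 6$, for which the lollipop $C_{n-t,r}$ possesses no Laplacian eigenvalue $1$ and neither the eigenvector contradiction nor the $P_d\cup P_{r'}$ reduction is available, so that a direct sign analysis of the characteristic polynomial seems to be required; pinning down the correct generating eigenvectors and verifying the non-vanishing of $Mz$ (respectively $Bz'$) at the attachment vertices is the computational heart of the proof.
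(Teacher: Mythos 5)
Your reduction to the exceptional configuration is correct, and in fact cleaner than the paper's: the observation that $\lceil t/3\rceil+\lceil d(C_{n-t,r})/3\rceil\geq\lceil d(G)/3\rceil+1$ can only fail when $t\equiv d(C_{n-t,r})\equiv 2\pmod 3$ and $\alpha=0$ absorbs in one stroke what the paper splits into its Case 1, the sub-case $\alpha\geq 3$, and its Case 2-1 (pendant deletion). Your treatment of the residues $r\equiv 0,1\pmod 6$ in the exceptional case is also sound and uses the paper's own machinery: for $r\equiv 1$ the multiplicity-one eigenvector of the lollipop from Proposition~\ref{prop:r=01-lollipop}(b) together with the equality case of Lemma~\ref{lem:eig_sum}, exactly as in the paper; for $r\equiv 0$ the $P_d\cup P_{r'}$ surgery from the proof of Proposition~\ref{prop:comp-r6}, which here is even easier because $d\equiv 1\pmod 3$ makes the relevant eigenvalue-$1$ eigenspace one-dimensional. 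Your $r\equiv 3$ argument via Proposition~\ref{prop:r=01-lollipop}(c) also goes through.

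The genuine gap is the cases $r\equiv 2,4,5\pmod 6$ of the exceptional configuration. There you offer only an unexecuted plan --- a sign analysis of $\phi(C_{n,r}(r',t);x)$ ``as in Proposition~\ref{prop:lollipop-3}'' --- but the paper has no characteristic-polynomial formula for compass graphs (Lemma~\ref{lem:char-lollipop} covers only lollipops), so you would have to derive one via Lemma~\ref{lem:char-joining}, including $\phi(L_v(C_{n-t,r});1)$ for a cycle vertex $v$, and then verify that all the signs cooperate; you yourself say this step ``seems to be required'' rather than proving it. (Your auxiliary claim that the lollipop has no eigenvalue $1$ for these residues is also unsupported: Proposition~\ref{prop:r=01-lollipop} states only sufficient conditions.) What you are missing is the symmetry of the compass graph: $C_{n,r}(r',t)$ and $C_{n,r}(r',s)$ are the same graph. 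In your exceptional case $\lfloor r/2\rfloor+s\equiv 2\pmod 3$, so $s\equiv 1\pmod 3$ when $r\equiv 2,3\pmod 6$ and $s\equiv 0\pmod 3$ when $r\equiv 4,5\pmod 6$; in the first instance $d(C_{n-s,r})=\lfloor r/2\rfloor+t\equiv 0\pmod 3$, i.e.\ $n\equiv s+\lceil r/2\rceil\pmod 3$, and in the second $s\equiv 0\pmod 3$, so either way Proposition~\ref{prop:compass_1}, applied with the roles of $t$ and $s$ exchanged, gives $m_G[0,1)\geq k$ with no computation at all. This switch to the decomposition $P_s\cup C_{n-s,r}$ is precisely how the paper disposes of every residue other than $r\equiv 0,1\pmod 6$ (its Case 2, ``$s\not\equiv 2$''), and without it your proof is incomplete exactly where you flag it as delicate.
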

\begin{proof}
    We set $G = C_{n,r}(r',t)$. 
    Note that $s=n-r-t$ and $r'=d(G)-t-s$.
    Let $k=\lceil\frac{d(G)}{3}\rceil + \left\lceil \frac{r}{6} \right\rceil -1$.
    A subgraph $P_t \cup C_{n-t,r}$ can be obtained from $G$ by deleting the edge $v_tv_{t+r'}$.
    Then $d(C_{{n-t},r})=n-t-\lceil \frac{r}{2} \rceil$.
    Let $\alpha=\lfloor \frac{r}{2} \rfloor - r'$. 
    Since $\lfloor \frac{r}{2} \rfloor \geq r'$, we have $\alpha \geq 0$ .
    Note that $d(C_{n-t,r})+t=d(G)+\alpha$. 
    We consider the following two cases:
    \begin{itemize}
        \item[(1)] $d(C_{{n-t},r}) \not\equiv 2 \pmod{3}$ or $t\not\equiv 2 \pmod{3}$.
        \item[(2)] $d(C_{{n-t},r}) \equiv 2 \pmod{3}$ and $t \equiv 2 \pmod{3}$.
    \end{itemize}

    {\bf Case 1. } Suppose that $d(C_{{n-t},r}) \not\equiv 2 \pmod{3}$ or $t\not\equiv 2 \pmod{3}$.
    Then \begin{align*}
        m_{C_{{n-t},r} \cup P_t}[0,1) \geq &\left\lceil\frac{d(C_{{n-t},r})}{3}\right\rceil +\bigg\lceil \frac{r}{6} \bigg\rceil -1 +\left\lceil\frac{t}{3} \right\rceil\\
        > & \left\lceil\frac{d(G)+\alpha}{3}\right\rceil +\bigg\lceil \frac{r}{6} \bigg\rceil -1\\
        \geq & \left\lceil\frac{d(G)}{3}\right\rceil +\bigg\lceil \frac{r}{6} \bigg\rceil -1=k.
    \end{align*}
    Thus, by Lemma~\ref{lem:interlacing}, we obtain $\mu_k(G)\leq \mu_{k+1}(P_t \cup C_{n-t,r})<1$ and hence $m_G[0,1) \geq k$.

    {\bf Case 2. } Suppose that $d(C_{{n-t},r}) \equiv 2 \pmod{3}$ and $t\equiv 2 \pmod{3}$.
    If $s\not \equiv 2 \pmod{3}$, then we consider the subgraph $P_s \cup C_{n-s,r}$. 
    Thus, by Case 1, we have $m_G[0,1) \geq k$. 
    Assume that $s \equiv 2 \pmod{3}$.
    For $\alpha \geq 3$, we have \begin{equation*}
        m_{P_t \cup C_{n-t,r}}[0,1) \geq \left\lceil\frac{d(G)+\alpha}{3}\right\rceil +\bigg\lceil \frac{r}{6} \bigg\rceil -1
        \geq \left\lceil\frac{d(G)}{3}\right\rceil +\bigg\lceil \frac{r}{6} \bigg\rceil=k+1.
    \end{equation*}
    By Lemma~\ref{lem:interlacing}, $\mu_k(G) \leq \mu_{k+1}(P_t \cup C_{n-t,r})<1$.
    Thus $m_G[0,1) \geq k$ for $\alpha \geq 3$.
    Now, we assume that $0 \leq \alpha <3$.

    \textbf{Case 2-1. } Suppose that $d(G) \not \equiv 1 \pmod{3}$.
    Let $G'$ be the graph obtained from $G$ by deleting the vertex $v_1$ in Figure~\ref{fig:compass}.
    Then, by Case 1, we have
    \begin{equation*}
        m_{G'}[0,1)\geq\left\lceil\frac{d(G)-1}{3}\right\rceil +\bigg\lceil \frac{r}{6} \bigg\rceil -1=\left\lceil\frac{d(G)}{3}\right\rceil +\bigg\lceil \frac{r}{6} \bigg\rceil -1.
    \end{equation*}
    Thus, by Lemma~\ref{lem:rem-pendant}, we obtain 
    \begin{equation*}
        m_{G}[0,1)\geq \left\lceil\frac{d(G)}{3}\right\rceil +\bigg\lceil \frac{r}{6} \bigg\rceil -1.
    \end{equation*}

    \textbf{Case 2-2. }
    Suppose that $d(G)\equiv 1 \pmod{3}$.
    Since $d(C_{{n-t},r})=\lfloor \frac{r}{2} \rfloor+s \equiv 2 \pmod{3}$, we have $\lfloor\frac{r}{2} \rfloor \equiv 0 \pmod{3}$, that is, $r \equiv 0,1 \pmod{6}$.
    Since $$d(G)=r'+s+t \equiv 1 \pmod{3}, t\equiv 2\pmod{3} ~~\text{and}~~ s\equiv 2\pmod{3},$$ we obtain $r' \equiv 0 \pmod{3}$.
    Thus, $\alpha=\lfloor\frac{r}{2}\rfloor-r'$ must be zero. 
    Hence the graph $G$ satisfies the following conditions: \begin{equation*}
        r \equiv 0,1\pmod{6},~r'=\bigg\lfloor \frac{r}{2}\bigg \rfloor,~~\text{and}~~t \equiv s\equiv 2\pmod{3}.
    \end{equation*}

    Suppose that $r \equiv 0 \pmod{6}$.
    Since the subgraph $G'$ is the compass graph $C_{n-1,r}(r',t-1)$, by Proposition~\ref{prop:comp-r6}, we have
    \begin{equation*}
        m_{G'}[0,1)\geq \bigg\lceil \frac{d(G)-1}{3} \bigg\rceil +\bigg\lceil \frac{r}{6} \bigg\rceil = \bigg\lceil \frac{d(G)}{3} \bigg\rceil +\bigg\lceil \frac{r}{6} \bigg\rceil-1.
    \end{equation*}
    Hence $m_G[0,1) \geq k$, by Lemma~\ref{lem:rem-pendant}.

    Suppose that $r \equiv 1 \pmod{6}$.
    Since $m_{P_t \cup C_{n-t,r}}[0,1) \geq k$, we have $$m_G[0,1) \geq k-1.$$
    Suppose that $\mu_k(G)\geq 1$.
    Since $n=r+t+s \equiv 0 \pmod{3}$, by Proposition~\ref{prop:r=01-lollipop}~(b), the lollipop $C_{n-t,r}$ has the Laplacian eigenvalue 1 with multiplicity 1.
    Thus, by Lemma~\ref{lem:interlacing}, we have $\mu_k(G)=\mu_{k+1}(P_t \cup C_{n-t,r})=1$. 
    Note that $L(G)-L(P_t \cup C_{n-t,r})=A=(a_{ij})$, where
    \begin{equation*}
        a_{ij}=\left\{\begin{array}{rl}
            1, & \mbox{if $(i,j) \in \{(t,t),~(t+ r',t+ r')\}$,}\\
            -1, & \mbox{if $(i,j) \in \{(t,t+ r'),~(t+ r',t)\}$,}\\
            0, & \mbox{otherwise.}
        \end{array}\right.
    \end{equation*}
    Since $A$ is permutationally similar to $L(K_2 \cup (n-2)K_1)$, we have
    \begin{equation*}
        \mu_{1}(A)=\cdots=\mu_{n-1}(A)=0~~\text{and}~~\mu_n(A)=2.
    \end{equation*}
    Then $\mu_k(G)=\mu_{k+1}(P_t \cup C_{n-t,r})+\mu_{n-1}(A)$. 
    By Lemma~\ref{lem:eig_sum}, there exists an eigenvector $z$ such that 
    $$L(G)z=\mu_k(G)z=z,$$ $$L(P_t \cup C_{n-t,r})z=\mu_{k+1}(P_t \cup C_{n-t,r})z=z,~~\text{and}$$ $$Az=\mu_{n-1}(A)z=0.$$
    Then $z$ must be a vector of the form
    \begin{equation*}
        z=\left[\begin{array}{c}
             0  \\\hline
             x
        \end{array}\right],
    \end{equation*}
    where the first $t$ coordinates are 0 and $x$ is the eigenvector corresponding to the Laplacian eigenvalue 1 of $C_{n-t,r}$.
    By Proposition~\ref{prop:r=01-lollipop} (b),
    the coordinate $z_{t+ r'}$ of the vector $z$ is $-1$. 
    Since $z_t=0$ and  $z_{t+ r'}=-1$,
    we obtain $Az \neq 0$. This is a contradiction. Thus, $\mu_k(G) <1$ and hence $m_G[0,1) \geq k$.
\end{proof}
\section{Conclusion}
Let $G$ be a unicyclic graph with diameter $d(G)$ and girth $r$. 
Let $G'$ be a minimal unicyclic graph of $G$ that contains the diametral path. Then $G'$ is one of the cycle, lollipop, or compass graphs with diameter $d(G')=d(G)$ and girth $r$.
By combining results in Section~\ref{sec:main}, we have
\begin{equation*}
        m_{G'}[0,1) \geq \bigg\lceil \frac{d(G')}{3} \bigg\rceil+ \bigg\lceil \frac{r}{6} \bigg\rceil-1.
\end{equation*}
By Lemma~\ref{lem:rem-pendant}, we obtain
    \begin{equation*}
        m_{G}[0,1) \geq \bigg\lceil \frac{d(G)}{3} \bigg\rceil+ \bigg\lceil \frac{r}{6} \bigg\rceil-1.
    \end{equation*}
Thus we have the following theorem.
\begin{theorem}
    Let $G$ be a unicyclic graph with diameter $d(G)$ and girth $r$. Then 
    \begin{equation*}
        m_G[0,1)\geq \bigg\lceil \frac{d(G)}{3} \bigg\rceil + \bigg\lceil \frac{r}{6} \bigg\rceil-1.
    \end{equation*}
\end{theorem}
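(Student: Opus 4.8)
The statement is the capstone that packages the three type-specific bounds proved above into a single inequality for arbitrary unicyclic graphs, so the plan is assembly rather than fresh analysis. First I would invoke the structural reduction recorded earlier: writing $G$ as a cycle $C_r$ carrying disjoint maximal trees, I fix a diametral path of $G$ and let $G'$ be the minimal unicyclic subgraph of $G$ that contains it. Repeatedly deleting leaves that lie off this diametral path collapses $G$ onto $G'$; because none of the removed leaves can shorten a path between two retained vertices, the distances within $G'$ agree with those in $G$, so $d(G')=d(G)$, and since the cycle $C_r$ is never touched the girth stays equal to $r$. As recorded in Figure~\ref{fig:types}, the resulting $G'$ is one of exactly three shapes---a cycle $C_r$, a lollipop $C_{n',r}$, or a compass graph $C_{n',r}(r',t)$---because a diametral path is a single simple path, enters and leaves the cycle along one arc, and can therefore reach into at most two of the attached trees.

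Next I would dispatch the three shapes to the results already in hand. For $G'=C_r$ the bound is the displayed consequence of Remark~\ref{rem:cycle}, namely $m_{C_n}[0,1)\ge\lceil d(C_n)/3\rceil+\lceil n/6\rceil-1$. For a lollipop it is exactly Proposition~\ref{prop:lollipop}. For a compass graph I would split on the hypothesis of Proposition~\ref{prop:compass_1}: either $n\equiv t+\lceil r/2\rceil\pmod 3$ or $t\equiv 0\pmod 3$ holds, giving that proposition, or its negation holds, which is precisely the hypothesis of Proposition~\ref{prop:compass_2}; since these two conditions are logical complements, the two propositions together exhaust every compass graph. In all three cases this yields
\begin{equation*}
    m_{G'}[0,1)\ \ge\ \bigg\lceil\frac{d(G')}{3}\bigg\rceil+\bigg\lceil\frac{r}{6}\bigg\rceil-1.
\end{equation*}

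Finally I would reattach the deleted leaves one at a time, applying Lemma~\ref{lem:rem-pendant} at each step so that the inequality $m[0,1)\ge k$ survives every reattachment and hence transfers from $G'$ back to $G$; substituting $d(G')=d(G)$ then completes the proof. Because all of the analytic work---interlacing, the eigenvalue-sum inequality, the explicit eigenvectors, and the characteristic-polynomial sign computations---has already been absorbed into the propositions, the only genuine verification left here is the reduction step. I expect the mild obstacle to be confirming the trichotomy together with the distance-preservation claim: one must check that a minimal unicyclic subgraph realizing a diametral path cannot require a third attached path, and that pruning the off-path leaves leaves every relevant distance, and hence the diameter, unchanged, so that $d(G')=d(G)$ holds exactly.
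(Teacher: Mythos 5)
Your proposal is correct and takes essentially the same route as the paper's own proof: reduce $G$ to the minimal unicyclic subgraph $G'$ containing a diametral path (a cycle, lollipop, or compass graph with $d(G')=d(G)$ and the same girth $r$), apply the cycle bound from Remark~\ref{rem:cycle}, Proposition~\ref{prop:lollipop}, and the complementary Propositions~\ref{prop:compass_1} and \ref{prop:compass_2} to get $m_{G'}[0,1)\geq \lceil d(G')/3\rceil+\lceil r/6\rceil-1$, then lift the bound back to $G$ by repeated use of Lemma~\ref{lem:rem-pendant}. The only difference is that you spell out details the paper leaves implicit, namely the preservation of diameter and girth under pruning of off-path leaves and the exhaustiveness of the case split between the two compass propositions.
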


\bibliographystyle{plain}

\begin{thebibliography}{10}

      
\bibitem {Haynes}
   T. W. Haynes, S. T. Hedetniemi, P. J. Slater,
   \newblock {Fundamentals of domination in graphs},
   \newblock {Monographs and Textbooks in Pure and Applied Mathematics} 208, Marcel Dekker, Inc., New York, 1998.
     
\bibitem {Kang}
  C. X. Kang, 
  \newblock{On domination number and distance in graphs},
  \newblock {Discrete Appl. Math.} 200 (2016) 203--206.
  
\bibitem{Hedetniemi}
S. T. Hedetniemi, D. P. Jacobs, V. Trevisan,        
\newblock{Domination number and {L}aplacian eigenvalue distribution},
\newblock{European J. Combin.} 53 (2016) 66--71.  
   
\bibitem {Guo}
J. Guo, J. Xue, R. Liu
\newblock {Laplacian eigenvalue distribution, diameter and domination number of trees}, 2022. arXiv: 2212.05283.
      
\bibitem {Grone}
R. Grone, R. Merris, V. S. Sunder,
\newblock {The Laplacian spectrum of a graph},
\newblock {SIAM J. Matrix Anal. Appl.} 11 (1990) 218--238.
   

\bibitem {Horn}
   R. A. Horn, C. R. Johnson,
\newblock {Matrix analysis, second ed.}, 
\newblock {Cambridge University Press, Cambridge}, 2013.

\bibitem {GuoJM05}
 J.-M. Guo,
\newblock {On the second largest Laplacian eigenvalue of trees},
\newblock {Linear Algebra Appl.}, 404 (2005) 251--261.

\bibitem {GuoJM08}
J.-M. Guo,
\newblock{A conjecture on the algebraic connectivity of connected graphs
              with fixed girth},
\newblock{Discrete Math.}, 308 (2008) 5702--5711.
   

    

\end{thebibliography}

\end{document}